 \newtheorem{The}{Theorem}[section]
 \newtheorem{Pro}[The]{Proposition}
 \theoremstyle{definition}
 \newtheorem{defn}[The]{Definition}
 \theoremstyle{remark}
 \newtheorem{Rem}[The]{Remark}
 \newtheorem{ex}{Example}
 \numberwithin{equation}{section}
\newcommand{\T}{\mathbb{T}}
\newcommand{\R}{\mathbb{R}}
\newcommand{\Z}{\mathbb{Z}}
\newcommand{\N}{\mathbb{N}}
\title{Homoclinic orbits and critical points \\of barrier functions}
\author{Piermarco Cannarsa \and Wei Cheng}
\address{Dipartimento di Matematica, Universit\`a di Roma Tor Vergata,
Via della Ricerca Scientifica 1, 00133 Roma, Italy}
\email{cannarsa@mat.uniroma2.it}
\address{Department of Mathematics, Nanjing University,
Nanjing 210093, China}
\email{chengwei@nju.edu.cn}
\date{\today}
\subjclass[2010]{26B25, 35A21, 49L25, 37J50, 70H20}
\keywords{Semiconcave functions, Hamilton-Jacobi equations,  weak KAM theory, homoclinic orbits.}
\begin{document}
\maketitle

\begin{abstract}
We interpret the close link between the critical points of Mather's barrier functions and  minimal homoclinic orbits with respect to the Aubry sets on $\T^n$. We also prove a critical point theorem for  barrier functions, and the existence of such homoclinic orbits on $\T^2$ as an application.
\end{abstract}

\section{Introduction}
In the huge literature that is devoted to the study of homoclinic orbits of Hamiltonian systems and dates back, at least, to the works of Poincar\'e, one can single out one important approach which is based on the Ambrosetti-Rabinowitz critical point theory see, e.g., \cite{Bolotin},\cite{Bolotin_book},\cite{Bolotin-Rabinowitz},\cite{Rabinowitz-Tanaka}.   Another powerful viewpoint in this context is provided by Mather's theory~\cite{Mather91,Mather93} and weak KAM theory (see, e.g. Fathi's book \cite{Fathi-book}) which helped to clarify many aspects of minimal orbits and invariant sets, see, e.g., \cite{Fathi98},\cite{Bernard 2000},\cite{Contreras-Paternain},\cite{Cui-Cheng-Cheng},\cite{Zheng-Cheng},\cite{Zhou}.

In this paper, we adopt a mixed strategy to investigate  the existence of minimal homoclinic orbits with respect to the Aubry sets for a given Tonelli Hamiltonian, using  critical point theory for certain barrier functions. 
We concentrate on the case of the $n$-torus throughout the paper, even if some of our results can be proved for more general manifolds using similar ideas. 

Let $H$ be a Tonelli Hamiltonian on $\T^n$, and consider the associated Hamilton-Jacobi equation which has the form
$$
H(x,c+Du(x))=\alpha(c),\qquad x\in\T^n,
$$
where $c\in\R^n$ stands for a cohomology class in $H^1(\T^n,\R)$ and  $\alpha(\cdot)$ is Mather's function. For fixed $c$, under the generic condition that the Aubry class is unique, it is easy to define the barrier function $B_c^*$
 as difference of two weak KAM solutions forming a conjugate pair $(u^-_c,u^+_c)$, that is, 
$$
B_c^*(x)=u^-_c(x)-u^+_c(x),\qquad x\in\T^n.
$$
As is well-known for viscosity solutions, $B^*_c$ turns out to be a locally semiconcave function with linear modulus, see \cite{Cannarsa-Sinestrari}. Moreover, in \cite{Cannarsa-Cheng-Zhang}, it was proved  that $x$ determines a homoclinic orbit with respect to the projected Aubry set $\mathscr{A}_c$ whenever $x$ is a critical point of $B^*_c$ outside $\mathscr{A}_c$ and the limiting differentials $D^*u^-_c(x)$ and $D^*u^+_c(x)$ have a nonempty intersection. More precisely, one can show that there exists a $C^2$ extremal curve $\gamma:(-\infty,\infty)\to\T^n$ such that $\gamma(0)=x$ and the $\alpha$- and $\omega$- limit sets of $\gamma$ belong to $\mathscr{A}_c$ even if $x$ is a singular critical point of $B^*_c$.

Building on the above result, our construction of minimal homoclinic (even heteroclinic) orbits is obtained in two steps:
\begin{enumerate}[(1)]
\item we need find enough critical points outside $\mathscr{A}_c$, and
\item we need a criterion to ensure that such critical points can indeed create minimal homoclinic orbits. 
\end{enumerate}
For the first step, we prove the following.
\begin{The}
Let $L$ be a Tonelli Lagrangian on $\T^n$, and for fixed $c\in\R^n$, suppose the projected Aubry set $\mathscr{A}_c$ consists of a single Aubry class.  Then there exist at least $\text{Cat}(\T^n\setminus U)$ critical points of the barrier function $B^*_c$ outside $\mathscr{A}_c$, where $U$ is any  open neighborhood of $\mathscr{A}_c$.
\end{The}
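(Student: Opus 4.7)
The plan is to apply Lusternik--Schnirelmann (LS) category theory to the locally Lipschitz semiconcave function $B^*_c$ on the compact manifold $\T^n$, via a min-max scheme tailored to the given neighborhood $U$ of $\mathscr{A}_c$. Under uniqueness of the Aubry class, the conjugate pair $(u^-_c,u^+_c)$ satisfies $u^-_c\geq u^+_c$ everywhere with equality exactly on $\mathscr{A}_c$; hence $B^*_c\geq 0$ and $(B^*_c)^{-1}(0)=\mathscr{A}_c$. I will use the nonsmooth notion of critical point from \cite{Cannarsa-Cheng-Zhang}, namely $x$ is critical iff $0\in D^+B^*_c(x)$, which by semiconcavity with linear modulus coincides with $0\in\partial^{C}B^*_c(x)$.

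The first substantive step is a deformation lemma. Exploiting that $D^+B^*_c$ is a nonempty, convex, upper-semicontinuous multifunction with compact values, one builds a pseudo-gradient-type Lipschitz vector field $X$ on the noncritical set in the spirit of Chang's nonsmooth LS theory. Its flow $\Phi_t$ strictly decreases $B^*_c$ off the critical set, so: if $[a,b]$ contains no critical value of $B^*_c$, then $\{B^*_c\leq b\}$ deformation retracts onto $\{B^*_c\leq a\}$ along $\Phi_t$.

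For the min-max, fix $U$ and by compactness choose $\varepsilon_0>0$ so small that $U_0:=\{B^*_c<\varepsilon_0\}\subset U$; note $U_0$ is forward invariant under $\Phi_t$ since $B^*_c$ decreases along trajectories. Set $N:=\text{Cat}(\T^n\setminus U)$ and, for $k=1,\ldots,N$, put
\[
c_k:=\inf_{A\in\mathcal{F}_k}\max_{x\in A}B^*_c(x),\qquad \mathcal{F}_k:=\bigl\{A\subset\T^n\ \text{closed}:\ \text{Cat}_{\T^n}(A\setminus U_0)\geq k\bigr\}.
\]
The choice $A=\T^n\setminus U$ shows $\mathcal{F}_k\neq\varnothing$ for $k\leq N$; moreover every $A\in\mathcal{F}_k$ must meet $\T^n\setminus U_0$, so $c_k\geq\varepsilon_0>0$. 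Because $U_0$ is $\Phi_t$-invariant, the descent flow preserves $\mathcal{F}_k$ (it cannot decrease $\text{Cat}_{\T^n}(\,\cdot\,\setminus U_0)$, by homotopy invariance of LS category), and combined with the deformation lemma this makes each $c_k$ a critical value of $B^*_c$. The classical LS multiplicity argument then shows that if $c_k=\cdots=c_{k+j}$ coincide, the critical set at that level has $\T^n$-category at least $j+1$ and therefore contains at least $j+1$ points. Since every $c_k>0$, all these critical points lie in $\T^n\setminus\mathscr{A}_c$; counting yields at least $N=\text{Cat}(\T^n\setminus U)$ of them.

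The main obstacle is the nonsmooth deformation step. Classical LS theory handles $C^1$ functions directly, but here one must carefully transfer the pseudo-gradient construction to the semiconcave setting---using upper semicontinuity of $D^+B^*_c$, the Lipschitz regularity of the minimal-norm selection, and, where needed, Clarke's generalized calculus---and verify that the resulting flow is continuous enough to support a bona fide deformation retract between sublevel sets. A secondary delicate point is checking that $\mathcal{F}_k$ is truly stable under this flow so that the min-max values $c_k$ are honest critical values; this relies on $U_0$ being forward invariant together with the homotopy invariance of LS category under the $\Phi_t$-induced homotopy.
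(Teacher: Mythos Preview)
Your plan is sound in outline but takes a genuinely different route from the paper. You attack the nonsmooth function $B^*_c$ directly, building a pseudo-gradient flow for the semiconcave function and running a relative min-max scheme in the style of Chang's nonsmooth critical point theory. The paper instead \emph{regularizes first}: it applies the Lasry--Lions sup-convolution $B_\lambda$ to $B^*_c$, and the crucial fact (Proposition~\ref{same_critical}\,(P4)) is that for small $\lambda$ the $C^{1,1}$ function $B_\lambda$ has \emph{exactly the same} critical points and critical values as $B^*_c$. Once one is in the $C^{1,1}$ world, the gradient flow of $B_\lambda$ is classical, and the paper simply invokes an off-the-shelf relative Lusternik--Schnirelmann result (Proposition~\ref{Pro_LS_2}) with $N=\T^n\setminus U$ and exit set $A=\varnothing$ to conclude.

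What each approach buys: the paper's regularization trick completely bypasses the ``main obstacle'' you flagged---there is no need to verify a nonsmooth deformation lemma, construct a Lipschitz selection of $D^+B^*_c$, or worry about the well-posedness of a generalized gradient flow. Your route is more self-contained in that it does not rely on the special regularization property of semiconcave functions, and it would generalize to settings where the exact preservation of critical points under smoothing is unavailable; but here it forces you to redo a fair amount of nonsmooth LS machinery. One technical point to watch in your scheme: the stability of $\mathcal{F}_k$ under the descent flow is not automatic, since $\Phi_T(A)\setminus U_0$ is only a \emph{subset} of $\Phi_T(A\setminus U_0)$ and LS category can drop on subsets. You would need either to work with a genuine relative category $\mathrm{Cat}_{\T^n}(A,U_0)$ (which is designed to be flow-monotone) or to modify the flow near $\partial U_0$; both are standard fixes, but the proposal as written glosses over this.
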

\noindent
In the above statement  $\text{Cat}(\T^n\setminus U)$ stands for the Lusternik-Schnirelmann category  of  $\T^n\setminus U$. Moreover, we recall that the uniqueness of the Aubry class holds for a generic Tonelli Hamiltonian as explained in Section~\ref{se:pre} below.

For the second step, we give the following criterion
where $\Lambda^+_x$ denotes the superlevel set of $B^*_c$ at $x$.
\begin{The}
Let $x\in\R^n$ be a critical point of $B^*_c$. Then we have that $$0\in D^{\ast}{u^-_c}(x)-D^{\ast}{u^+_c}(x)$$ if any of the following conditions is satisfied:
\begin{enumerate}[(a)]
  \item at least one of the two solutions $v^-$ and $v^+$ is differentiable at $x$,
  \item the tangent space to $\Lambda^+_x$ at $x$ is such that
\begin{equation}\label{non-degen_intro}
\mbox{\rm dim}\big(\mbox{\rm Tan}(x,\Lambda^+_x)\big)\geqslant n-1,
\end{equation}
\item $n=2$ and there exists a unit vector $\theta$ such that $\langle p,\theta\rangle\geqslant 0$ for all $p\in D^+B^*_c(x)$,
\item $n=2$ and $x$ is not an isolated critical point of $B^*_c$.\end{enumerate}
\end{The}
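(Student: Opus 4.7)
The plan is to reduce the problem to a convex-geometric question about points on a strictly convex surface. Since $u^{-}_c$ is semiconcave and $u^{+}_c$ is semiconvex, the additivity of superdifferentials for sums of semiconcave functions (applied to $u^{-}_c+(-u^{+}_c)$) yields
\begin{equation*}
D^{+}B^{*}_c(x)\,=\,D^{+}u^{-}_c(x)-D^{-}u^{+}_c(x)\,=\,\mathrm{co}(D^{*}u^{-}_c(x))-\mathrm{co}(D^{*}u^{+}_c(x)).
\end{equation*}
The criticality hypothesis $0\in D^{+}B^{*}_c(x)$ then produces $\bar p\in\mathrm{co}(D^{*}u^{-}_c(x))$ and $\bar q\in\mathrm{co}(D^{*}u^{+}_c(x))$ with $\bar p=\bar q$, and the task is to upgrade this to a common element of $D^{*}u^{-}_c(x)\cap D^{*}u^{+}_c(x)$. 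The key ingredient is that every reachable gradient of a weak KAM solution lies on the level set $S_x=\{p\in\R^n:H(x,p)=\alpha(c)\}$; by the Tonelli condition $p\mapsto H(x,p)$ is strictly convex, so any non-trivial convex combination of distinct points of $S_x$ lies strictly inside $S_x$. This strict convexity is the lever I will use throughout.

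Case~(a) is then immediate: if $u^{-}_c$ is differentiable at $x$, then $D^{*}u^{-}_c(x)=\{\bar p\}$, so $\bar p=\sum_i\lambda_iq_i$ with each $q_i\in D^{*}u^{+}_c(x)\subset S_x$. Since $\bar p$ itself lies on $S_x$, strict convexity forces $q_i=\bar p$ for every $i$, hence $\bar p\in D^{*}u^{+}_c(x)$; the case that $u^{+}_c$ is differentiable is symmetric. For~(b), the condition $\dim\mathrm{Tan}(x,\Lambda^{+}_x)\geqslant n-1$ combined with $0\in D^{+}B^{*}_c(x)$ forces the directional derivative $(B^{*}_c)'(x;v)=\min_{p\in D^{+}B^{*}_c(x)}\langle p,v\rangle$ to vanish on an $(n-1)$-dimensional tangent subspace, which traps $D^{+}B^{*}_c(x)$ in the orthogonal line. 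Through the Minkowski-sum decomposition this collinearity is inherited by $\mathrm{co}(D^{*}u^{\pm}_c(x))$: both become parallel (possibly degenerate) segments on a common line $\ell$. Because $\ell$ meets the strictly convex set $S_x$ in at most two points, and because the endpoints of each segment belong to $D^{*}u^{\pm}_c(x)\subset S_x$, these endpoints must coincide, providing a common reachable gradient.

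For the two-dimensional cases, (c) follows the same template: the half-plane condition exposes $0$ as a boundary point of $D^{+}B^{*}_c(x)$ lying on its face $F=D^{+}B^{*}_c(x)\cap\theta^{\perp}$, and the Minkowski-face identity $F=F^{-}-F^{+}$ with $F^{\pm}\subset\mathrm{co}(D^{*}u^{\pm}_c(x))$ reduces matters to the same collinear-segments-on-$S_x$ argument. Case~(d) reduces to (c): choosing critical points $x_k\to x$ with $x_k\neq x$ and $v_k=(x_k-x)/|x_k-x|\to v$, the semiconcave estimate $|B^{*}_c(x_k)-B^{*}_c(x)|\leqslant C|x_k-x|^2$ (a consequence of $0\in D^{+}B^{*}_c(x)\cap D^{+}B^{*}_c(x_k)$), combined with the Hadamard directional derivative of $B^{*}_c$ at $x$, yields $(B^{*}_c)'(x;v)=0$, which is precisely the half-plane condition of~(c) with $\theta=v$. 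The main difficulty is case~(b): extracting the exact collinearity of the Minkowski summands from the dimensional information on the contingent cone, so that the strictly convex geometry of $S_x$ can finish the job.
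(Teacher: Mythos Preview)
Your treatment of (a), (c), and (d) is correct and matches the paper's approach. The gap is in (b). From $\dim\mathrm{Tan}(x,\Lambda^{+}_x)\geqslant n-1$ you can indeed extract $n-1$ linearly independent unit vectors $\theta^1,\dots,\theta^{n-1}$ in the \emph{contingent cone} $T_{\Lambda^{+}_x}(x)$, and semiconcavity gives $\langle p,\theta^i\rangle\geqslant 0$ for all $p\in D^{+}B^{*}_c(x)$; together with $0\in D^{+}B^{*}_c(x)$ this yields $(B^{*}_c)'(x;\theta^i)=\min_p\langle p,\theta^i\rangle=0$. But the map $v\mapsto\min_p\langle p,v\rangle$ is only positively homogeneous and concave, not linear, so vanishing at the $\theta^i$ does \emph{not} propagate to their linear span: in general $-\theta^i\notin T_{\Lambda^{+}_x}(x)$, and $\langle p,\theta^i\rangle$ can be strictly positive for some $p$. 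Thus your assertion that $D^{+}B^{*}_c(x)$ is trapped in the orthogonal line---and hence that $\mathrm{co}(D^{*}u^{\pm}_c(x))$ are themselves segments---is unjustified. For a concrete obstruction in $\R^3$ with $S_x$ the unit sphere, take $D^{*}v^{-}(x)=\{e_1,e_2,e_3\}$ and $D^{*}v^{+}(x)=\{e_3\}$: then $D^{+}B^{*}_c(x)=\mathrm{co}\{e_1-e_3,\,e_2-e_3,\,0\}$ is a genuine two-dimensional triangle containing $0$ as a vertex, while there are still two independent directions $\theta$ with $\langle p,\theta\rangle\geqslant 0$ on it.

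The remedy is precisely the face argument you already used for (c), and this is how the paper proceeds. From $\langle p^{-}-p^{+},\theta^i\rangle\geqslant 0$ for all $p^{-}\in D^{+}v^{-}(x)$, $p^{+}\in D^{-}v^{+}(x)$ one obtains separating values $\lambda_i$ with $\langle p^{-},\theta^i\rangle\geqslant\lambda_i\geqslant\langle p^{+},\theta^i\rangle$. The common point $\bar p=\bar q$ produced by $0\in D^{+}B^{*}_c(x)$ then lies on the line $\ell=\bigcap_i\{p:\langle p,\theta^i\rangle=\lambda_i\}$, and it is the \emph{intersections} $\ell\cap D^{+}v^{-}(x)$ and $\ell\cap D^{-}v^{+}(x)$---not the full superdifferentials---that are segments whose endpoints are extreme points of $D^{\mp}v^{\pm}(x)$ and hence lie on the strictly convex level set $S_x$. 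Since $\ell\cap S_x$ has at most two points, these endpoint sets coincide, yielding the desired element of $D^{*}v^{-}(x)\cap D^{*}v^{+}(x)$. Your sketch for (c) is exactly this argument in the case $n-1=1$; you should run the same reasoning for general $n$ in place of the incorrect collinearity claim.
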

Since a $B^*_c$ is at most semiconcave, critical points have to be interpreted and dealt with in a nonsmooth setting. For this purpose, we borrow a result from Attouch~\cite{Attouch} which applies the Lasry-Lions regularization method to critical points, see Proposition \ref{same_critical} below. Appealing to the above theorems, we obtain the following. 

\begin{The}\label{intro:main_thm}
Let $L$ be a Tonelli Lagrangian on $\T^2$, and for fixed $c\in\R^2$, suppose the projected Aubry set $\mathscr{A}_c$ consists of a single Aubry class.  If there exists an open neighborhood $U\subset\T^2$ of $\mathscr{A}_c$ such that $\T^2\setminus U$ is non-contractible, then there exists a minimal homoclinic orbit with respect to the Aubry set, $\tilde{\mathscr{A}}_c$, lying outside $\tilde{\mathscr{A}}_c$.
\end{The}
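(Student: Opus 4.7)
The plan is to chain Theorems~1.1 and~1.2 with the extremal-curve construction from \cite{Cannarsa-Cheng-Zhang} recalled in the introduction.

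Since $\T^2\setminus U$ is non-contractible, its Lusternik--Schnirelmann category satisfies $\text{Cat}(\T^2\setminus U)\ge 2$, so Theorem~1.1 produces at least two critical points of $B^*_c$ outside $\mathscr{A}_c$. The key step is to exhibit one such critical point $x$ at which one of the conditions (a)--(d) of Theorem~1.2 holds. If some critical point is non-isolated, condition (d) applies immediately. Otherwise, every critical point of $B^*_c$ outside $\mathscr{A}_c$ is isolated, and we claim that condition (c) holds at one of them. In dimension two, (c) fails at $x$ exactly when $0\in\operatorname{int} D^+B^*_c(x)$; inserting two opposite supergradients $\pm\varepsilon v\in D^+B^*_c(x)$ into the semiconcavity inequality then forces $x$ to be a strict local maximum of $B^*_c$. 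Hence if (c) failed at every critical point outside $\mathscr{A}_c$, all such points would be strict local maxima. With at least two distinct strict local maxima in the open superlevel $\{B^*_c>0\}=\T^2\setminus\mathscr{A}_c$, a mountain-pass construction applied to the Lasry--Lions regularization of $B^*_c$, together with Proposition~\ref{same_critical}, produces an additional saddle-type critical point of $B^*_c$, still outside $\mathscr{A}_c$, at which (c) necessarily holds.

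At such a critical point $x$, Theorem~1.2 yields $D^{\ast}u^-_c(x)\cap D^{\ast}u^+_c(x)\neq\emptyset$, and the construction of \cite{Cannarsa-Cheng-Zhang} gives a $C^2$ extremal curve $\gamma\colon\R\to\T^2$ with $\gamma(0)=x$ whose $\alpha$- and $\omega$-limit sets are contained in $\mathscr{A}_c$. Lifting $(\gamma,\dot\gamma)$ to $T\T^2$ produces an orbit homoclinic to $\tilde{\mathscr{A}}_c$ and lying outside it, since $x\notin\mathscr{A}_c$. Minimality is inherited from the calibration of $\gamma$ by $u^-_c$ on $(-\infty,0]$ and by $u^+_c$ on $[0,\infty)$: the action of $\gamma$ along each finite subarc coincides with the $(u^-_c-u^+_c)$-difference at its endpoints, leaving no room for a competing absolutely continuous curve with smaller action.

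The main obstacle is the second paragraph: running a nonsmooth mountain-pass/Lusternik--Schnirelmann argument for $B^*_c$ in the semiconcave setting and certifying that the saddle-type critical point it produces really lies in $\{B^*_c>0\}$ rather than collapsing onto $\mathscr{A}_c$. The first point is handled through Attouch's regularization combined with Proposition~\ref{same_critical}; the second relies on the fact that the two competing strict local maxima have strictly positive values, so minimax paths between them can be arranged inside a superlevel $\{B^*_c\ge\varepsilon\}$ for some $\varepsilon>0$.
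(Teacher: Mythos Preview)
Your approach is essentially the same as the paper's: apply the category bound (Theorem~1.1), reduce via Theorem~1.2~(c),(d) to the case where every critical point outside $\mathscr{A}_c$ is an isolated strict local maximum, and then run a mountain-pass argument on the Lasry--Lions regularization to produce a contradiction or a saddle-type point. Your direct argument that failure of~(c) forces $0\in\operatorname{int}D^+B^*_c(x)$, hence a strict local maximum, is a clean reformulation of the paper's Remark~3.3.

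There is, however, a real gap in your mountain-pass step. You assert that the mountain-pass construction ``produces an additional saddle-type critical point,'' but the Ambrosetti--Rabinowitz theorem only yields a critical point at the minimax level; it does not exclude that this point is itself another local maximum. The paper closes this gap by invoking Hofer's refinement (Proposition~\ref{Hofer_result}), which guarantees that the critical point at level $b_\lambda$ is either a local maximum or of mountain-pass type, and then iterating: if the new point $x_2$ is again an isolated local maximum, run the mountain-pass between $x_0$ and $x_2$ (or $x_1$ and $x_2$) to get $x_3$, and so on. The resulting infinite sequence of isolated local maxima has a cluster point, contradicting isolation. You should make this dichotomy and the iteration explicit; otherwise the step ``produces a saddle-type critical point'' is unjustified.

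Your final paragraph correctly identifies a second delicate point---ensuring the minimax critical point does not collapse onto $\mathscr{A}_c$---and your proposed remedy (working inside a superlevel $\{B^*_c\ge\varepsilon\}$) is the right instinct, though it needs the observation that a path joining the two maxima can be chosen in $\T^2\setminus U$ so that $b_\lambda>0$. The paper is in fact somewhat terse on this issue as well.
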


Is is worth noting that our approach also works to construct connecting orbits among distinct Aubry classes if the projected Aubry set has more than one class. 

\begin{The}
Let $L$ be a Tonelli Lagrangian on $\T^2$, and for fixed $c\in\R^2$, suppose the projected Aubry set $\mathscr{A}_c$ consists of  finitely many Aubry classes. If there exists an open neighborhood $U\subset\T^2$ of $\mathscr{A}_c$ such that $\T^2\setminus U$ is non-contractible, then there must be a connecting orbit between any pair of distinct Aubry classes.
\end{The}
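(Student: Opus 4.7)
The plan is to reduce to the single-class setting of Theorem~\ref{intro:main_thm} by constructing, for each ordered pair of distinct Aubry classes $(\mathscr{A}_c^i,\mathscr{A}_c^j)$, a \emph{mixed} barrier function whose critical points outside $\mathscr{A}_c$ correspond to connecting orbits from $\mathscr{A}_c^i$ to $\mathscr{A}_c^j$. Concretely, I would fix a backward weak KAM solution $u^-_{c,i}$ whose calibrating curves emanate from $\mathscr{A}_c^i$ (for instance, $u^-_{c,i}(x)=h_c(\mathscr{A}_c^i,x)$ via the Peierls barrier based at class $i$) and a forward weak KAM solution $u^+_{c,j}$ whose calibrating curves arrive at $\mathscr{A}_c^j$, and set
\begin{equation*}
B^*_{c,i,j}(x)=u^-_{c,i}(x)-u^+_{c,j}(x),\qquad x\in\T^2.
\end{equation*}
This difference is locally semiconcave with linear modulus on $\T^2$ and, because the two classes are distinct, is not identically constant.

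First, I would rerun the Lusternik--Schnirelmann minimax of the critical-point theorem for $B^*_c$, with $B^*_{c,i,j}$ in place of $B^*_c$, on the set $\T^2\setminus U$. The hypothesis that $\T^2\setminus U$ is non-contractible forces $\text{Cat}(\T^2\setminus U)\geqslant 2$, and the same deformation argument, built on the Lasry--Lions regularization of Proposition~\ref{same_critical}, produces at least one critical point $x_0\in\T^2\setminus U$ of $B^*_{c,i,j}$. The setup has to be adapted so that the two classes play the asymmetric roles of ``source'' and ``target'' for the mixed barrier, but the topological input is identical to the single-class case.

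Second, I would invoke the $n=2$ criterion at $x_0$. One of the alternatives (c) or (d) must hold: either $D^+B^*_{c,i,j}(x_0)$ is contained in a closed half-space, or $x_0$ is a non-isolated critical point. Either way, the criterion yields
\begin{equation*}
0\in D^{\ast}u^-_{c,i}(x_0)-D^{\ast}u^+_{c,j}(x_0),
\end{equation*}
so there is a common limiting differential $p$ at $x_0$. The $C^2$ extremal issued from $(x_0,c+p)$ under the Hamiltonian flow is calibrated in the past by $u^-_{c,i}$ and in the future by $u^+_{c,j}$; therefore its $\alpha$-limit set lies in $\mathscr{A}_c^i$ and its $\omega$-limit set in $\mathscr{A}_c^j$, and $i\neq j$ makes it a genuine connecting orbit. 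Running the argument over every ordered pair with $i\neq j$ then produces a connecting orbit between each prescribed pair of distinct classes.

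The main obstacle will be the first step: verifying that the Lusternik--Schnirelmann minimax still detects a critical point of $B^*_{c,i,j}$ \emph{outside} $\mathscr{A}_c$ when the barrier is asymmetric in $(i,j)$. One must check that the sublevel structure and the deformation lemma for the mixed function $B^*_{c,i,j}$ behave as in the single-class case, and, crucially, exclude that the minimax value is attained on some third class $\mathscr{A}_c^k$, which would produce a trivial critical point not corresponding to a connecting orbit. Once this is settled, the $n=2$ criterion takes care of the dynamical step exactly as in the proof of Theorem~\ref{intro:main_thm}.
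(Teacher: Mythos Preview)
Your approach is essentially the paper's: define the mixed barrier $B_{1,2}=u^-_1-u^+_2$ for elementary solutions attached to the two classes, then rerun the argument of Theorem~\ref{main_thm} (Lasry--Lions regularization, Lusternik--Schnirelmann count on $\T^2\setminus U$, Hofer's mountain-pass dichotomy) to produce a critical point satisfying condition (c) or (d) of Theorem~\ref{criterion_I}. The paper adds one step you take for granted: it argues explicitly that the $\alpha$-limit set of a curve calibrated by $u^-_1=h_c(y,\cdot)$ with $y\in A_1$ lies in $A_1$ itself, using that under the finiteness hypothesis the Aubry classes are exactly the connected components of $\mathscr{A}_c$; this is what pins down the endpoints of the orbit. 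Your worry about the minimax value being attained on a ``third class'' is moot, since the critical point is produced in $\T^2\setminus U\subset\T^2\setminus\mathscr{A}_c$.
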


The paper is organized as follows. In Section 2,  we recall basic material on  semiconcave functions and weak KAM theory. In section 3, we discuss the homoclinic phenomenon and critical points of semiconcave functions, and prove the main results of the paper. 

\bigskip

\noindent{\bf Acknowledgments} This work was partially supported by the Natural Scientific Foundation of China (Grant No. 11271182), the  National Basic Research Program of China (Grant No. 2013CB834100), and the National Group for Mathematical Analysis and Probability of the Italian Istituto Nazionale di Alta Matematica ``Francesco Severi''. The  authors are grateful to Albert Fathi, Ludovic Rifford and Antonio Siconolfi for helpful discussions and comments on the results of this paper.

\section{Preliminary facts}
\label{se:pre}
\subsection{Hamilton-Jacobi equations and viscosity solutions}
Let $\T^n$ be the $n$-dimensional torus. We denote by $T\T^n$ the tangent bundle of $\T^n$ and by $T^{\ast}\T^n$ the cotangent bundle.

\begin{defn}
A function $L:T\T^n\to\R$ is said to be a {\em Tonelli Lagrangian} if the following assumptions are satisfied.
\begin{enumerate}[(L1)]
  \item {\em Smoothness}: $L=L(x,v)$ is of class at least $C^2$.
  \item {\em Convexity}: The Hessian $\frac{\partial^2 L}{\partial v^2}(x,v)$ is positive definite on each fibre $T_x\T^n$.
  \item {\em Superlinearity}:
$$
\lim_{|v|\to\infty}\frac{L(x,v)}{|v|}=\infty\quad\text{uniformly for } x\in\T^n.
$$
\end{enumerate}
\end{defn}

Given a Tonelli Lagrangian  $L$,  the {\em Tonelli Hamiltonian} $H=H(x,p)$ associated with  $L$ is defined as follows:
$$
H(x,p)=\max\big\{\langle p,v\rangle-L(x,v): v\in T_x\T^n\big\},\quad (x,p)\in T^{\ast}\T^n\,.
$$
It is easy to see that for any Tonelli Lagrangian $L$, the associated Hamiltonian $H$ satisfies  similar smoothness ($H$ is of class at least $C^2$), convexity, and superlinearity conditions, which will be referred to as (H1), (H2), and (H3).

Throughout this paper we will be concerned with the Hamilton-Jacobi equation
\begin{equation}\label{H_J}
H_c(x,Du(x))=H(x,c+Du(x))=\alpha(c)\qquad (x\in \T^n)
\end{equation}
with $H$ any Tonelli Hamiltonian and $H_c(x,\cdot):=H(x,c+\cdot)$, where $c\in\R^n$ and $\alpha:\R^n\to\R$ is Mather's $\alpha$-function. In other words, we can suppose that $H=H(x,p)$ is $\Z^n$-periodic in the $x$ variable,  convex and superlinear in the $p$ variable, and $u$ is a $\Z^n$-periodic solution of \eqref{H_J}.

We say that $u:\T^n\to\R$ is a {\em viscosity subsolution} (resp. {\em supersolution}) of \eqref{H_J}, if for each $C^1$ function $\phi: M\to\R$ such that $u-\phi$ admits a maximum (resp. a minimum) at $x\in M$, we have
$$
H_c(x,D\phi(x))\leqslant\alpha(c),\quad (\text{resp.}\ H_c(x,Du(x))\geqslant\alpha(c)).
$$
We say that $u: \T^n\to\R$ is a {\em viscosity solution}, if it is both a subsolution and a supersolution. A viscosity solution of \eqref{H_J} is called a {\em critical} viscosity solution.

\subsection{Semiconcave functions}

Let $\Omega\subset\R^n$ be open and convex. A function $u:\Omega\to\R$ is {\em semiconcave} (with linear modulus) if there exists a constant $C>0$, such that
$$
\lambda u(x)+(1-\lambda)u(y)-u(\lambda x+(1-\lambda)y)\leqslant\frac C2\lambda(1-\lambda)|x-y|^2
$$
for any $x,y\in\Omega$ and $\lambda\in[0,1]$, and $C$ is called a {\em semiconcavity constant} for $u$ in $\Omega$. A function $u:\Omega\to\R$ is said to be {\em locally semiconcave} if for any $x\in\Omega$, there exists an open convex subset $U\ni x$ such that $u|_U$ is semiconcave.

Let $u:\Omega\subset\R^n\to\R$ be a semiconcave function, for any $x\in\Omega$, the set
\begin{align*}
D^-u(x)&=\left\{p\in\R^n:\liminf_{y\to x}\frac{u(y)-u(x)-\langle p,y-x\rangle}{|y-x|}\geqslant 0\right\},\\
D^+u(x)&=\left\{p\in\R^n:\limsup_{y\to x}\frac{u(y)-u(x)-\langle p,y-x\rangle}{|y-x|}\leqslant 0\right\}.
\end{align*}
are called the (Dini) {\em subdifferential} and {\em superdifferential} of $u$ at $x$ respectively.

\begin{Pro}[\cite{Cannarsa-Sinestrari}]\label{criterion-Du_semiconcave}
Let $u:\Omega\to\R$ be a function on $\Omega\subset\R^n$. If there exists a constant $C>0$ such that, for any $x\in\Omega$, there exists $p\in\R^n$ such that
\begin{equation}\label{criterion_for_lin_semiconcave}
u(y)\leqslant u(x)+\langle p,y-x\rangle+\frac C2|y-x|^2,\quad \forall y\in\Omega,
\end{equation}
then $u$ is semiconcave with constant $C$.

If $u$ is semiconcave function on $\Omega\subset\R^n$ with constant $C$, then \eqref{criterion_for_lin_semiconcave} holds for any $p\in D^+u(x)$.
\end{Pro}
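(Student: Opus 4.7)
The statement has two independent implications, and I would prove them separately.

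For the first direction (quadratic upper bound at every point implies semiconcavity with constant $C$), my plan is a direct computation. Fix $x,y\in\Omega$ and $\lambda\in[0,1]$, and set $z:=\lambda x+(1-\lambda)y$. By hypothesis, choose $p\in\R^n$ such that $u(w)\le u(z)+\langle p,w-z\rangle+\tfrac{C}{2}|w-z|^2$ for all $w\in\Omega$. Specializing to $w=x$ and to $w=y$, and using $x-z=(1-\lambda)(x-y)$ together with $y-z=-\lambda(x-y)$, the linear terms cancel in the combination $\lambda(\cdot)+(1-\lambda)(\cdot)$ because $\lambda(1-\lambda)-(1-\lambda)\lambda=0$. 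The quadratic terms collapse via the identity $\lambda(1-\lambda)^2+(1-\lambda)\lambda^2=\lambda(1-\lambda)$, yielding $\lambda u(x)+(1-\lambda)u(y)-u(z)\le\tfrac{C}{2}\lambda(1-\lambda)|x-y|^2$, which is precisely semiconcavity with constant $C$.

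For the second direction (semiconcavity implies the quadratic bound at every $p\in D^+u(x)$), my plan is to subtract off the quadratic part and reduce to standard concavity. Define $\tilde u(z):=u(z)-\tfrac{C}{2}|z|^2$ on $\Omega$. Using the elementary identity $\lambda|x|^2+(1-\lambda)|y|^2-|\lambda x+(1-\lambda)y|^2=\lambda(1-\lambda)|x-y|^2$, the semiconcavity inequality for $u$ translates exactly into the concavity inequality for $\tilde u$. From the definitions of the Dini superdifferential one has $D^+\tilde u(x)=D^+u(x)-Cx$, so any $p\in D^+u(x)$ corresponds to $q:=p-Cx\in D^+\tilde u(x)$. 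Since $\tilde u$ is concave on the convex set $\Omega$, every element of the superdifferential is a global supporting linear functional, hence $\tilde u(y)\le\tilde u(x)+\langle q,y-x\rangle$ for all $y\in\Omega$. Reinserting $\tilde u=u-\tfrac{C}{2}|\cdot|^2$ and simplifying $\tfrac{C}{2}|y|^2-\tfrac{C}{2}|x|^2-C\langle x,y-x\rangle=\tfrac{C}{2}|y-x|^2$ produces the claimed bound with constant $C$.

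The only nontrivial ingredient, and the main place where care is required, is the global supporting-hyperplane property for concave functions on a convex set: although $D^+$ is defined by a limsup condition as $y\to x$, for a concave function this local inequality automatically extends to all of $\Omega$. I would justify this by restricting to the segment $x+t(y-x)$ for $t\in[0,1]$ and applying the standard monotonicity of the difference quotient $t\mapsto[\tilde u(x+t(y-x))-\tilde u(x)]/t$, which is nonincreasing by concavity, so the limit at $t=0^+$ (which bounds $\langle q,y-x\rangle$ from above) dominates the value at $t=1$.
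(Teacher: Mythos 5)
Your proof is correct and follows the standard route taken in Cannarsa--Sinestrari (which the paper merely cites): the first implication by evaluating the quadratic support inequality at the endpoints of a segment, and the second by subtracting $\tfrac{C}{2}|\cdot|^2$ to reduce to the global supporting-hyperplane property of concave functions, justified via monotonicity of one-dimensional difference quotients. No gaps; the convexity of $\Omega$, implicitly assumed in the paper's setup, is used where needed.
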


Let $u:\Omega\to\R$ be locally Lipschitz. A vector $p\in\R^n$ is called a {\em limiting differential} of $u$ at $x$ if there exists a sequence $\{x_n\}\subset\Omega\setminus\{x\}$ such that $u$ is differentiable at $x_k$ for each $k\in\N$, and
$$
\lim_{k\to\infty}x_k=x\quad\text{and}\quad \lim_{k\to\infty}Du(x_k)=p.
$$
The set of all limiting differentials of $u$ at $x$ is denoted by $D^{\ast}u(x)$.

\begin{Pro}\label{basic_facts_of_superdifferential}
Let $u:\Omega\subset\R^n\to\R$ be a semiconcave function and $x\in\Omega$. Then the following properties hold.
\begin{enumerate}[(a)]
  \item $D^+u(x)$ is a nonempty compact  convex set in $\R^n$ and $D^{\ast}u(x)\subset\partial D^+u(x)$, where $\partial D^+u(x)$ denotes the topological boundary of $D^+u(x)$.
  \item The set-valued function $x\rightsquigarrow D^+u(x)$ is upper semicontinuous.
  \item If $D^+u(x)$ is a singleton, then $u$ is differentiable at $x$. Consequently, if $D^+u(x)$ is a singleton for every points in $\Omega$, then $u\in C^1$.
  \item $D^+u(x)$ equals the convex hull of $D^{\ast}u(x)$.
  \item $D^{\ast}u(x)=\{\lim_{i\to\infty}p_i: p_i\in D^+u(x_i), x_i\to x,\mathrm{diam}(D^+u(x_i))\to 0\}$.
\end{enumerate}
\end{Pro}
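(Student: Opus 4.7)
The backbone of the proof is the equivalent characterization of $D^+u(x)$ provided by Proposition~\ref{criterion-Du_semiconcave}: for a semiconcave function with constant $C$, a vector $p$ lies in $D^+u(x)$ precisely when the quadratic upper bound \eqref{criterion_for_lin_semiconcave} holds at $x$ with slope $p$. From this, parts (a) and (b) fall out quickly. Nonemptiness follows from the converse half of Proposition~\ref{criterion-Du_semiconcave}; convexity and closedness are preserved under convex combinations and pointwise limits of the inequality; boundedness comes from the local Lipschitz constant implicit in semiconcavity. Upper semicontinuity is then immediate: if $x_k\to x$, $p_k\in D^+u(x_k)$ and $p_k\to p$, passing to the limit in \eqref{criterion_for_lin_semiconcave} gives $p\in D^+u(x)$. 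The inclusion $D^*u(x)\subset \partial D^+u(x)$ in (a) needs a separate argument. Given $p=\lim Du(x_k)$ with $x_k\to x$, I would add the quadratic bound for an arbitrary $q\in D^+u(x)$ evaluated at $x_k$ to the analogous bound for $Du(x_k)$ evaluated at $x$; this yields $\langle Du(x_k)-q, x_k-x\rangle\leqslant C|x_k-x|^2$. After normalizing $\nu_k:=(x_k-x)/|x_k-x|$ and extracting a limit direction $\nu$, one obtains $\langle q,\nu\rangle\geqslant\langle p,\nu\rangle$ for every $q\in D^+u(x)$, so $p$ supports $D^+u(x)$ in direction $\nu$ and cannot be interior.

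For (c), once $D^+u(x)=\{p\}$ the quadratic upper bound at $x$ is in hand, so only a matching lower bound is missing. A standard directional-derivative computation for semiconcave functions (or a mean-value argument along segments emanating from $x$ combined with upper semicontinuity from (b)) pinches the difference quotient to $p$, giving $u(y)=u(x)+\langle p,y-x\rangle+o(|y-x|)$; the global $C^1$ claim then follows from (b). Part (e) is essentially a repackaging. If $p\in D^*u(x)$ is realized by differentiability points $x_i\to x$, then $D^+u(x_i)=\{Du(x_i)\}$ has diameter zero. Conversely, for $p_i\in D^+u(x_i)$ with $\mathrm{diam}(D^+u(x_i))\to 0$ and $p_i\to p$, upper semicontinuity of $D^+u$ combined with Rademacher's theorem yields a differentiability point $z_i$ close to $x_i$ with $Du(z_i)$ arbitrarily close to $D^+u(x_i)$; diagonalizing produces $z_i\to x$ with $Du(z_i)\to p$, so $p\in D^*u(x)$.

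The genuine difficulty is part (d). The inclusion $\mathrm{conv}\,D^*u(x)\subset D^+u(x)$ is routine: elements of $D^*u(x)$ inherit the quadratic upper bound by passing to the limit in \eqref{criterion_for_lin_semiconcave}, and $D^+u(x)$ is convex. For the reverse, I would invoke Carath\'eodory's theorem to reduce matters to showing that every extreme point of the compact convex set $D^+u(x)$ already belongs to $D^*u(x)$. Given an extreme point $p$, pick a direction $\xi$ such that $\langle\xi,\cdot\rangle$ attains its maximum on $D^+u(x)$ uniquely at $p$, and then construct differentiability points $x_k\to x$ with $Du(x_k)\to p$ by examining approximate maximizers of $y\mapsto u(y)-\langle\xi,y\rangle$ on shrinking balls around $x$, using Rademacher's theorem to select the $x_k$ at points of differentiability. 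The uniqueness of the maximizer of $\langle\xi,\cdot\rangle$ on $D^+u(x)$, combined with upper semicontinuity from (b), is what forces $Du(x_k)\to p$. Executing this construction so that the support-functional uniqueness truly pins the gradients down is, in my view, the most delicate step of the whole proposition.
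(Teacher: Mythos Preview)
The paper does not supply a proof of this proposition at all: it is stated as background material, with the surrounding discussion implicitly deferring to \cite{Cannarsa-Sinestrari}. So there is no ``paper's own proof'' to compare against; your sketch is essentially the standard argument one finds in that reference, and the overall strategy is sound.

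One genuine gap is worth flagging in your treatment of~(d). You write: ``Given an extreme point $p$, pick a direction $\xi$ such that $\langle\xi,\cdot\rangle$ attains its maximum on $D^+u(x)$ uniquely at $p$.'' Such a $\xi$ exists only when $p$ is an \emph{exposed} point of $D^+u(x)$, and in dimension $n\geqslant 3$ a compact convex body can have extreme points that are not exposed. The fix is standard but should be made explicit: by Straszewicz's theorem the exposed points of a compact convex set are dense in its extreme points, so your construction gives $D^{*}u(x)\supset\{\text{exposed points of }D^+u(x)\}$; since $D^{*}u(x)$ is closed (an easy diagonal argument), it then contains all extreme points, and Minkowski--Carath\'eodory finishes the inclusion $D^+u(x)\subset\mathrm{conv}\,D^{*}u(x)$. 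Without this step your argument is incomplete.

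A smaller point: in~(e), the ``conversely'' direction as you wrote it is slightly roundabout. You appeal to Rademacher plus upper semicontinuity to find nearby differentiability points $z_i$, but the cleaner route is to note that $\mathrm{diam}(D^+u(x_i))\to 0$ together with~(b) and~(c) already forces any choice $p_i\in D^+u(x_i)$ to be close to $Du$ at nearby differentiability points; the diagonalization is then immediate. This is cosmetic rather than a gap.
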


A point $x\in\Omega$ is called a {\em singular point} of $u$ if $D^+u(x)$ is not a singleton. The set of all singular points of $u$, also called the {\em singular set} of $u$, is denoted by $\Sigma_u$.

\subsection{Facts from weak KAM theory}
In what follow, $H$ stands for a Tonelli Hamiltonian on the $n$-torus $\T^n$ and $L$ for the corresponding Tonelli Lagrangian.

\begin{defn}
Let $L$ be a $C^2$ Tonelli Lagrangian on $T\T^n$ and set, for any $c\in\R^n$,
$$L_c(x,v)=L(x,v)-\langle c,v\rangle \qquad\forall (x,v)\in T\T^n.$$ A function $u_c:\T^n\to\R$ is said to be {\em dominated} by $L_c+\alpha(c)$ iff, for each absolutely continuous arc $\gamma:[a,b]\to\T^n$ with $a<b$, one has
$$
u_c(\gamma(b))-u_c(\gamma(a))\leqslant\int^b_aL_c(\gamma(s),\dot{\gamma}(s))ds+\alpha(c)(b-a).
$$
When this happens, one writes $u_c\prec L_c+\alpha(c)$.
\end{defn}

\begin{defn}
Let $c\in\R^n$, and $u_c$ be a real-valued function on $\T^n$. An absolutely continuous curve $\gamma:[a,b]\to\T^n$ is said to be $(u_c,L_c,\alpha(c))$-{\em calibrated} if
$$
u_c(\gamma(b))-u_c(\gamma(a))=\int^b_aL_c(\gamma(s),\dot{\gamma}(s))ds+\alpha(c)(b-a).
$$
\end{defn}

The following well known (see, e.g. \cite{Fathi-book} \cite{Fathi-Siconolfi2004}) facts are useful to clarify the relation between viscosity solutions and weak KAM solutions.

\begin{Pro}
Let $c\in\R^n$. A function $u_c:\T^n\to\R$ is dominated by $L_c+\alpha(c)$ if and only if $u_c$ is a viscosity subsolution of \eqref{H_J}.

If $u_c$ is a viscosity solution of \eqref{H_J}, then there exists an absolutely continuous arc $\gamma_x:(-\infty,0]\to \T^n$ with $\gamma_x(0)=x$ such that $\gamma_x$ is $(u_c,L_c,\alpha(c))$-calibrated.
\end{Pro}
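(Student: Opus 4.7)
The statement splits into two classical facts of weak KAM theory, which I would tackle as follows.

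For the equivalence between domination and the subsolution property, the easier direction is dominated $\Rightarrow$ subsolution. Fix a $C^1$ test $\phi$ with $u_c-\phi$ attaining a local maximum at $x$, pick $v\in\R^n$, and apply the domination inequality to the affine curve $\gamma(s)=x+sv$ on $[-t,0]$:
\[
u_c(x)-u_c(x-tv)\;\leqslant\;\int_{-t}^{0}L_c(\gamma,\dot\gamma)\,ds+\alpha(c)\,t=tL_c(x,v)+\alpha(c)t+o(t).
\]
Combining this with the maximum inequality $u_c(x)-u_c(x-tv)\geqslant t\langle D\phi(x),v\rangle+o(t)$ and letting $t\downarrow 0$ yields $\langle D\phi(x),v\rangle\leqslant L_c(x,v)+\alpha(c)$; taking the supremum over $v$ converts Fenchel--Legendre duality into the desired bound $H_c(x,D\phi(x))\leqslant\alpha(c)$.

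The converse, subsolution $\Rightarrow$ dominated, is the harder direction and carries the main technical obstacle. A viscosity subsolution of a Tonelli Hamilton--Jacobi equation is automatically Lipschitz, so $H_c(x,Du_c(x))\leqslant\alpha(c)$ holds Lebesgue-a.e.\ by Rademacher's theorem. The Fenchel inequality $\langle p,v\rangle\leqslant H_c(x,p)+L_c(x,v)$ then gives $\langle Du_c,\dot\gamma\rangle\leqslant L_c(\gamma,\dot\gamma)+\alpha(c)$ a.e., which one would like to integrate along an arbitrary absolutely continuous curve $\gamma$. The subtle point is that the set where $u_c$ fails to be differentiable could have positive $\gamma$-measure, so the chain rule is not automatic. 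I would overcome this by inf/sup-convolution regularization (Lasry--Lions): the regularizations $u_c^\varepsilon$ are $C^{1,1}$ and are approximate subsolutions, the chain rule applies classically to them along $\gamma$, and the desired inequality follows by passing to the limit as $\varepsilon\downarrow 0$ using the uniform convergence $u_c^\varepsilon\to u_c$ and stability of the Hamilton--Jacobi inequality.

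For the second assertion, I would use the Lax--Oleinik representation of viscosity solutions. For each $T>0$, Tonelli's direct method supplies a minimizer $\gamma_T\colon[-T,0]\to\T^n$ with $\gamma_T(0)=x$ realizing
\[
u_c(x)=u_c(\gamma_T(-T))+\int_{-T}^{0}L_c(\gamma_T,\dot\gamma_T)\,ds+\alpha(c)T.
\]
The classical a priori Lipschitz estimates for Tonelli minimizers, which depend only on $\|u_c\|_\infty$ and on $L$, make the family $\{\gamma_T\}_{T\geqslant 1}$ equi-Lipschitz uniformly in $T$. A diagonal Arzel\`a--Ascoli extraction then produces a locally uniform limit $\gamma\colon(-\infty,0]\to\T^n$ with $\gamma(0)=x$. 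Passing to the limit in the identity above on each finite interval $[a,0]$, using lower semicontinuity of the action for the inequality $\geqslant$ and the domination property from the first part for the inequality $\leqslant$, shows that $\gamma$ is $(u_c,L_c,\alpha(c))$-calibrated, which completes the proof.
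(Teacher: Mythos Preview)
The paper does not provide its own proof of this proposition; it is stated as a well-known fact with references to Fathi's book and Fathi--Siconolfi (2004). Your sketch is correct and follows essentially the standard route taken in those references: the domination/subsolution equivalence via Fenchel duality plus a regularization step for the chain rule, and the backward calibrated curve via finite-horizon minimizers and a diagonal Arzel\`a--Ascoli extraction.

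One point worth making explicit in your write-up: the identity
\[
u_c(x)=u_c(\gamma_T(-T))+\int_{-T}^{0}L_c(\gamma_T,\dot\gamma_T)\,ds+\alpha(c)T
\]
that you invoke for each $T>0$ is the backward dynamic programming principle, and it is precisely here that the \emph{supersolution} half of the viscosity-solution hypothesis enters (the subsolution half already gave domination, hence the inequality $\leqslant$; the supersolution property is what forces equality for some minimizer). You use this identity but do not derive it, so the logical dependence on the full viscosity-solution assumption is slightly hidden. This is not a gap so much as a place where the argument could be tightened; the derivation is again standard and can be found in the same references the paper cites.
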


Now, we recall some semiconcavity properties of  viscosity solutions. The following result is fundamental (see, e.g., \cite{Fathi-book} \cite{Rifford}).

\begin{Pro}
Any viscosity solution of the Hamilton-Jacobi equation \eqref{H_J} is locally semiconcave with linear modulus.
\end{Pro}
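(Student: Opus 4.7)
The plan is to apply the sufficient condition in Proposition \ref{criterion-Du_semiconcave}. Fixing a compact set $K\subset\T^n$, one must produce, at each $x_0\in K$, a vector $p=p(x_0)\in\R^n$ and a constant $C_K>0$ uniform in $x_0\in K$, such that
\begin{equation*}
u(y)\leqslant u(x_0)+\langle p,y-x_0\rangle+\tfrac{C_K}{2}|y-x_0|^2
\end{equation*}
for every $y$ in a fixed neighborhood of $x_0$. Local semiconcavity with linear modulus then follows at once. The candidate $p$ will arise from the variational structure behind a backward calibrated curve.

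First, a standard a priori argument shows that $u$ is globally Lipschitz on $\T^n$. Since $u\prec L_c+\alpha(c)$ (by the previous proposition), applying the domination inequality to constant-speed segments of prescribed length and invoking the superlinearity of $L_c$ yields a uniform Lipschitz constant $M_0$. Any $(u,L_c,\alpha(c))$-calibrated curve is an extremal of $L_c$ with momentum $L_v(\gamma,\dot\gamma)=c+Du(\gamma)$ at points of differentiability of $u$, and the identity $H(\gamma,c+Du(\gamma))=\alpha(c)$ confines $(\gamma(s),\dot\gamma(s))$ to a compact set $K^*\subset T\T^n$ depending only on $H$ and $M_0$. In particular $|\dot\gamma(s)|\leqslant R_0$ along every such calibrated curve.

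Fix $\delta>0$ small (to be chosen depending only on $K$). By the preceding proposition there is a backward calibrated curve $\gamma:[-\delta,0]\to\T^n$ with $\gamma(0)=x_0$. For $y$ close to $x_0$ introduce the variation
\begin{equation*}
\eta(s)=\gamma(s)+\tfrac{\delta+s}{\delta}\,(y-x_0),\qquad s\in[-\delta,0],
\end{equation*}
so that $\eta(-\delta)=\gamma(-\delta)$, $\eta(0)=y$, $|\eta-\gamma|\leqslant|y-x_0|$, and $\dot\eta-\dot\gamma=(y-x_0)/\delta$. Combining domination of $u$ along $\eta$ with calibration along $\gamma$ gives
\begin{equation*}
u(y)-u(x_0)\leqslant\int_{-\delta}^{0}\bigl[L_c(\eta,\dot\eta)-L_c(\gamma,\dot\gamma)\bigr]\,ds.
\end{equation*}
Since $L_c$ is $C^2$ and $(\gamma,\dot\gamma)$ stays in the compact set $K^*$, a second-order Taylor expansion of $L_c$ in $(x,v)$ bounds the integrand by a linear expression in $y-x_0$, with coefficients determined by $L_{c,v}(\gamma,\dot\gamma)$ and $L_{c,x}(\gamma,\dot\gamma)$, plus a remainder of order $|y-x_0|^2+|y-x_0|^2/\delta^2$. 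Integrating over an interval of length $\delta$ produces a linear term $\langle p,y-x_0\rangle$ and a quadratic remainder of order $(\delta+1/\delta)|y-x_0|^2$. Choosing $\delta=\delta_K$ small but fixed yields the desired constant $C_K$, and Proposition \ref{criterion-Du_semiconcave} concludes.

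The main technical obstacle is the uniform velocity bound along calibrated curves, without which the Taylor remainder in the last step cannot be controlled with a constant independent of the base point. Resolving it relies on the two Tonelli ingredients highlighted above: Lipschitz regularity of viscosity solutions (from $L_c$-domination together with superlinearity), and the fact that calibrated curves are extremals pinned to the energy level $\alpha(c)$, so that their velocities stay in a compact subset of $T\T^n$ determined only by $H$ and $\|Du\|_{\infty}$.
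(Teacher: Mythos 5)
The paper cites \cite{Fathi-book} and \cite{Rifford} for this result and provides no proof of its own; your argument is precisely the classical one found there---compare the action of the backward calibrated curve $\gamma$ with that of the linear interpolation $\eta$, after establishing a uniform velocity bound on calibrated curves from the Lipschitz bound on $u$ and energy conservation, and read off the semiconcavity constant from the second-order Taylor remainder of $L_c$ on the resulting compact set. The argument is correct, and Proposition~\ref{criterion-Du_semiconcave} closes it as you say. One small inaccuracy in the setup: the Lipschitz bound on $u$ does not require superlinearity of $L_c$; domination $u\prec L_c+\alpha(c)$ applied to unit-speed straight segments already gives $|u(y)-u(x)|\leqslant\bigl(\sup_{x,\,|v|\leqslant 1}L_c(x,v)+\alpha(c)\bigr)|y-x|$. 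Superlinearity (of $H$, dually of $L$) is instead what makes the energy level $\{H(x,p)=\alpha(c)\}$ compact and hence bounds $|\dot\gamma|$ along calibrated curves, which is the step where you do invoke it correctly.
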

The following is the weak KAM analogue of \cite[Theorem~6.4.12]{Cannarsa-Sinestrari}
\begin{Pro}\label{Ext_and_reachable}
$\mathrm{Ext}\,D^+u(x)=D^{\ast}u(x)$ for any viscosity solution $u$ of \eqref{H_J} and any $x\in\T^n$.
\end{Pro}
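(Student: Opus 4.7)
The plan is to prove the two inclusions $\mathrm{Ext}\,D^+u(x)\subset D^{\ast}u(x)$ and $D^{\ast}u(x)\subset\mathrm{Ext}\,D^+u(x)$ separately, using Proposition~2.7(d) for one direction and the strict convexity of $H$ in the momentum variable (assumption (H2)) for the other.

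For the inclusion $\mathrm{Ext}\,D^+u(x)\subset D^{\ast}u(x)$, I would invoke the general fact that the extreme points of the convex hull of a compact set are contained in that set. By Proposition~2.7(d), $D^+u(x)=\mathrm{conv}\,D^{\ast}u(x)$; since $u$ is locally Lipschitz, $D^{\ast}u(x)$ is bounded, and a standard diagonal argument shows that it is closed, hence compact. Therefore any $p\in\mathrm{Ext}\,D^+u(x)$, being a convex combination of elements of $D^{\ast}u(x)$, must in fact coincide with one such element, otherwise a nontrivial convex decomposition of $p$ would contradict extremality.

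For the reverse inclusion $D^{\ast}u(x)\subset\mathrm{Ext}\,D^+u(x)$, I would use the Hamilton--Jacobi equation together with the strict convexity hypothesis (H2). Let $p\in D^{\ast}u(x)$: by definition, there exist $x_k\to x$ at which $u$ is differentiable and $Du(x_k)\to p$; since any viscosity solution satisfies $H(x_k,c+Du(x_k))=\alpha(c)$ at points of differentiability, passing to the limit and using continuity of $H$ gives $H(x,c+p)=\alpha(c)$. On the other hand, since $u$ is a viscosity subsolution, for every $q\in D^+u(x)$ one has $H(x,c+q)\leqslant\alpha(c)$. Now suppose $p=\lambda q_1+(1-\lambda)q_2$ with $q_1,q_2\in D^+u(x)$ and $\lambda\in(0,1)$. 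Convexity of $H$ in the momentum variable yields
$$
\alpha(c)=H(x,c+p)\leqslant \lambda H(x,c+q_1)+(1-\lambda)H(x,c+q_2)\leqslant\alpha(c),
$$
so equality holds throughout, and the strict convexity assumption (H2) forces $q_1=q_2=p$. This shows $p$ is an extreme point of $D^+u(x)$.

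The main (conceptual) obstacle is ensuring that elements of $D^{\ast}u(x)$ really sit on the level set $\{H(x,c+\cdot)=\alpha(c)\}$ rather than merely the sublevel set; this is where the viscosity-solution property (as opposed to mere subsolution) is essential, since it guarantees the equation holds classically at points of differentiability. Once this is in place, the argument is short and the strict convexity of $H$ in $p$ does all the work. No additional machinery beyond Propositions~2.7 and~2.9 and assumption (H2) is needed.
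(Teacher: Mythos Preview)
Your proof is correct. Note that the paper does not actually supply a proof of this proposition: it is stated as the weak KAM analogue of \cite[Theorem~6.4.12]{Cannarsa-Sinestrari} and simply cited. Your argument---one inclusion from $D^+u(x)=\mathrm{conv}\,D^{\ast}u(x)$ together with compactness of $D^{\ast}u(x)$, the other from the fact that limiting differentials lie on the energy level $\{H(x,c+\cdot)=\alpha(c)\}$ combined with the strict convexity of $H(x,\cdot)$---is precisely the standard route to this result and is essentially the proof given in the cited reference; the paper itself uses the same mechanism implicitly in deriving \eqref{eq:level_set_H}.
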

Finally, we recall a result which connects calibrated curves with limiting differentials (see \cite{Cannarsa-Sinestrari} and \cite{Rifford}).
\begin{Pro}\label{reachable_grad_and_backward}
Let $x\in \T^n$ and $u:\T^n\to\R$ be a viscosity solution of the Hamilton-Jacobi equation \eqref{H_J}. Then $p\in D^{\ast}u(x)$ if and only if there exists a $C^1$ curve $\gamma:(-\infty,0]\to \T^n$ with $\gamma(0)=x$ which is $(u,L_c,\alpha(c))$-calibrated, and $p=\frac{\partial L_c}{\partial v}(x,\dot{\gamma}(0))$.
\end{Pro}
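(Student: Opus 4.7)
The plan is to prove both directions of the equivalence using standard weak KAM arguments: necessity via a compactness/continuous-dependence argument starting from a differentiability approximation of $x$, and sufficiency via the differentiability of $u$ at interior points of a calibrated curve.

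For the necessity direction, I would pick a sequence $x_k\to x$ of differentiability points of $u$ with $Du(x_k)\to p$, as guaranteed by the definition of $D^{\ast}u(x)$. By the preceding proposition, each $x_k$ is the endpoint of a $(u,L_c,\alpha(c))$-calibrated curve $\gamma_k:(-\infty,0]\to\T^n$, and each such $\gamma_k$ is automatically a $C^2$ Euler-Lagrange extremal of $L_c$. The differentiability of $u$ at $x_k$ combined with the calibration equality (comparing the calibrated integral against a variational competitor that perturbs $\gamma_k$ near $0$) forces the momentum formula $Du(x_k)=\frac{\partial L_c}{\partial v}(x_k,\dot\gamma_k(0))$. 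Setting $v_k:=\dot\gamma_k(0)$, the boundedness of $\{Du(x_k)\}$ and the superlinearity of $L$ bound $\{v_k\}$, so up to a subsequence $v_k\to v$ with $p=\frac{\partial L_c}{\partial v}(x,v)$ by continuity of the Legendre transform. Continuous dependence on initial data for the Euler-Lagrange flow, combined with a diagonal extraction on an exhaustion of $(-\infty,0]$ by compact subintervals, produces a $C^2$ extremal $\gamma$ with $(\gamma(0),\dot\gamma(0))=(x,v)$. Passing to the limit in the calibration identities on each compact subinterval (via continuity of $u$ and $L_c$) shows that $\gamma$ is itself $(u,L_c,\alpha(c))$-calibrated.

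For the sufficiency direction, given a calibrated curve $\gamma$ with $\gamma(0)=x$ and $p=\frac{\partial L_c}{\partial v}(x,\dot\gamma(0))$, I would invoke the classical fact (see \cite{Cannarsa-Sinestrari} and \cite{Rifford}) that $u$ is differentiable at every interior point of a calibrated curve, with $Du(\gamma(t))=\frac{\partial L_c}{\partial v}(\gamma(t),\dot\gamma(t))$ for each $t\in(-\infty,0)$. Taking $t\to 0^-$ and using continuity of $\gamma$, $\dot\gamma$, and the Legendre transform yields $Du(\gamma(t))\to p$, exhibiting $p$ as a limiting differential and giving $p\in D^{\ast}u(x)$.

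The main obstacle is justifying this interior differentiability, which encodes the two-sided information contained in the calibration: on one hand, the dominance $u\prec L_c+\alpha(c)$ applied to small forward displacements along $\gamma$ from $\gamma(t)$ produces $\frac{\partial L_c}{\partial v}(\gamma(t),\dot\gamma(t))\in D^+u(\gamma(t))$; on the other, using the calibration equality on $[t-h,t]$ with small $h>0$ and reversing the role of $\gamma$ places the same covector in $D^-u(\gamma(t))$, forcing $D^+u(\gamma(t))=D^-u(\gamma(t))$ to be a singleton and hence differentiability. A secondary but routine issue is the uniform $C^2$-control of the $\gamma_k$ on the whole half-line, obtained from Arzelà-Ascoli applied subinterval by subinterval using the conservation of energy $H_c(\gamma_k,Du(\gamma_k))=\alpha(c)$ along the calibrated extremals.
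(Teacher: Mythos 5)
The paper states this proposition as a known fact (citing \cite{Cannarsa-Sinestrari} and \cite{Rifford}) without giving a proof, so your argument is a reconstruction of the standard weak KAM proof rather than a route to compare against the paper's. The structure---necessity via passage to the limit of backward calibrated curves emanating from nearby differentiability points, sufficiency via interior differentiability of $u$ along a calibrated curve---is the classical one and is essentially correct. Two details should be corrected. In the sketch of interior differentiability you have the two halves swapped: it is the calibration on $[t-h,t]$, combined with the first-variation formula for the Euler--Lagrange extremal $\gamma$ (fixing the endpoint $\gamma(t-h)$ and varying $\gamma(t)$), that yields $\tfrac{\partial L_c}{\partial v}(\gamma(t),\dot\gamma(t))\in D^+u(\gamma(t))$, whereas the calibration on $[t,t+h]$ (available precisely because $t<0$) places the same covector in $D^-u(\gamma(t))$. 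In the sufficiency direction there is a small gap when $\gamma$ is the constant curve at $x$: then $\gamma(t)=x$ for all $t\leqslant 0$ and the sequence $\gamma(t_k)$ is not admissible in the definition of $D^{\ast}u(x)$, which requires points distinct from $x$; in that case $x$ lies in the projected Aubry set, $u$ is differentiable at $x$ with $Du(x)=p$, and one instead picks an arbitrary sequence of differentiability points $x_k\to x$, using upper semicontinuity of $D^+u$ to conclude $Du(x_k)\to p$. Finally, the diagonal extraction and the appeal to energy conservation plus Arzel\`a--Ascoli are superfluous: once $(\gamma_k(0),\dot\gamma_k(0))\to(x,v)$, completeness of the Euler--Lagrange flow for Tonelli Lagrangians and continuous dependence on initial data already give locally uniform $C^2$-convergence of the whole sequence on $(-\infty,0]$.
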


\subsection{Barrier functions}
For $t>0$, $x,y\in\T^n$ and $c\in\R^n$, we introduce the following
quantity
\begin{equation}
h^c_t(x,y)=\inf\int^t_0L_c(\gamma(s),\dot{\gamma}(s))\ ds,
\end{equation}
where the infimum is computed over all absolutely continuous arcs $\gamma:[0,t]\to\T^n$ such that
$\gamma(0)=x$ and $\gamma(t)=y$.

Let $c\in\R^n$ and let $h^c_t(x,y)$ be defined as above. {\em Peierls' barrier} is defined as
\begin{equation}\label{Peierls'-barrier}
h_c(x,y)=\liminf_{t\to\infty}h^c_t(x,y)+\alpha(c)t.
\end{equation}
We call $\mathscr{A}_c=\{x\in\T^n~:~h_c(x,x)=0\}$ the {\em projected Aubry set}. It is well known that $\mathscr{A}_c$ is nonempty and compact for any $c\in\R^n$.

\begin{Pro}\label{h_c_determin_viscosity_solution}
{\em (\cite{Fathi-Siconolfi2004})} If  Peierls' barrier $h_c$ is finite then, for each
  $x\in\T^n$, $u_c(y):=h_c(x,y)$ is a global critical viscosity
  solution of \eqref{H_J}.
  Moreover, for any $x,y\in\T^n$, there is an arc $\xi:(-\infty,0]\to\T^n$, with
  $\xi(0)=y$, such that
  $$
  u_c(\xi(0))-u_c(\xi(-t))=\int^0_{-t}L_c(\xi(s),\dot{\xi}(s))\ dt+\alpha(c)t,\quad \forall\,t\geqslant0\,.
  $$
\end{Pro}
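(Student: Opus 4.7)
The strategy is to prove (i) $u_c:=h_c(x,\cdot)$ is dominated by $L_c+\alpha(c)$, hence a viscosity subsolution, and (ii) for every $y\in\T^n$ there is a backward $(u_c,L_c,\alpha(c))$-calibrated curve ending at $y$; this will simultaneously supply the ``moreover'' clause and the missing supersolution property, yielding the desired critical viscosity solution.

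For (i) I would first check that $u_c$ is Lipschitz on $\T^n$: given $y_1,y_2$ and a short connecting arc $\beta$ of time $\varepsilon$ and bounded speed, concatenating a near-minimizer for $h^c_{t_n}(x,y_1)+\alpha(c)t_n$ with $\beta$ bounds $h^c_{t_n+\varepsilon}(x,y_2)+\alpha(c)(t_n+\varepsilon)$ from above, and taking the liminf gives $u_c(y_2)\leqslant u_c(y_1)+C|y_1-y_2|$. The same concatenation trick, now with a given absolutely continuous arc $\gamma:[a,b]\to\T^n$ in place of $\beta$, produces
\[
u_c(\gamma(b))\leqslant u_c(\gamma(a))+\int_a^b L_c(\gamma,\dot\gamma)\,ds+\alpha(c)(b-a),
\]
i.e.\ $u_c\prec L_c+\alpha(c)$, whence $u_c$ is a viscosity subsolution by the domination/subsolution equivalence recalled above.

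For (ii) I would pick $t_n\to\infty$ realizing the liminf in \eqref{Peierls'-barrier} for $h_c(x,y)$ and Tonelli minimizers $\gamma_n:[0,t_n]\to\T^n$ from $x$ to $y$. Reparametrize $\xi_n(s):=\gamma_n(s+t_n)$ on $[-t_n,0]$. From the action bound $\int_0^{t_n}L_c(\gamma_n,\dot\gamma_n)\,ds+\alpha(c)t_n\leqslant u_c(y)+o(1)$ combined with superlinearity and standard Tonelli regularity, the $\xi_n$ will be equi-Lipschitz on every fixed window $[-T,0]$; a diagonal Arzel\`a--Ascoli extraction then yields a limit $\xi:(-\infty,0]\to\T^n$ with $\xi(0)=y$. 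Each restriction $\gamma_n|_{[t_n-T,t_n]}$ minimizes the action between its endpoints, so lower semicontinuity of the action under uniform convergence of equi-Lipschitz curves gives
\[
\int_{-T}^0 L_c(\xi,\dot\xi)\,ds+\alpha(c)T\leqslant u_c(y)-u_c(\xi(-T)),
\]
while the opposite inequality is the domination established in the preceding step. Thus $\xi$ is $(u_c,L_c,\alpha(c))$-calibrated on $(-\infty,0]$. The supersolution property follows by a standard test-function argument along $\xi$: if $u_c-\phi$ attains a minimum at $y$ with $\phi\in C^1$, the calibration inequality forces $\langle D\phi(y),\dot\xi(0)\rangle\geqslant L_c(y,\dot\xi(0))+\alpha(c)$, whence Fenchel duality yields $H_c(y,D\phi(y))\geqslant\alpha(c)$.

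The chief technical obstacle will be establishing the uniform velocity bound for $\gamma_n$ on backward windows $[t_n-T,t_n]$ of fixed length, \emph{independent of $n$}: this does not follow directly from the global action bound but requires energy conservation along $C^2$ Tonelli minimizers together with the superlinearity of $L$ to pin down a uniform kinetic estimate near the terminal time. A secondary care-point is that the liminf-realizing choice of $t_n$ is essential in order to recover equality (rather than only $\leqslant$) in the calibration identity after passing to the limit; without it one would obtain a dominated curve but not a calibrated one, and the supersolution step would collapse.
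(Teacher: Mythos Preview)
The paper does not prove this proposition at all: it is quoted verbatim from \cite{Fathi-Siconolfi2004} and used as background input. There is therefore no ``paper's own proof'' to compare against.

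That said, your sketch is sound and is essentially the standard weak KAM argument one finds in Fathi's monograph. The two-step scheme (domination via concatenation, then extraction of a calibrated curve from Tonelli minimizers realizing the liminf) is exactly how this result is usually established. Your identification of the two delicate points is accurate: the uniform speed bound on $\gamma_n$ over backward windows of fixed length follows from Mather's a~priori compactness for Tonelli minimizers of time $\geqslant 1$ on a compact manifold (energy conservation along each extremal plus a mean-value bound on the action per unit time), and the liminf-realizing choice of $t_n$ is indeed what converts the limiting inequality into an equality, since otherwise $h^c_{t_n}(x,y)+\alpha(c)t_n$ need not converge to $u_c(y)$. One small addendum worth making explicit is the uniform Lipschitz estimate $|h^c_t(x,y_1)-h^c_t(x,y_2)|\leqslant C|y_1-y_2|$ for $t\geqslant 1$, which you use implicitly when passing from $\gamma_n(t_n-T)$ to $\xi(-T)$ inside the liminf.
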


Let $c\in\R^n$ and let $h_c$ be Peierls' barrier. The {\em barrier function} $B_c^{\ast}(x)$ is defined by Mather (\cite{Mather93}) as
\begin{equation}\label{defn_barrier}
B_c^{\ast}(x)=\inf_{y,z\in\mathscr{M}_c}\{h_c(y,x)+h_c(x,z)-h_c(y,z)\},\quad x\in\T^n,
\end{equation}
where $\mathscr{M}_c$ is the projected Mather set, that is, the projection onto $\T^n$ of  Mather's set $\tilde{\mathscr{M}}_c$. Note that $\mathscr{M}_c\subset\mathscr{A}_c$  (see, e.g., \cite{Bernard2002}\cite{Mather91}\cite{Mather93}).
By Proposition~\ref{h_c_determin_viscosity_solution}, $h_c(x,\cdot)$ gives a global viscosity solution of \eqref{H_J} and  $h_c(\cdot,x)$  a global critical solution of \eqref{H_J} with the Hamiltonian $\breve{H}(x,p)=H(x,-p)$. Fix $y,z\in\mathscr{M}_c$ and, for each $x\in\T^n$, let
$$
u_{c,y}^-(x)=h_c(y,x),\quad u_{c,z}^+(x)=-h_c(x,z).
$$
Then
\begin{equation}\label{B_c_star}
B^{\ast}_c(x)=\inf_{y,z\in\mathscr{M}_c}\{u_{c,y}^-(x)-u_{c,z}^+(x)-h_c(y,z)\}.
\end{equation}

For any $x,y\in\T^n$, define  {\em Mather's pseudometric} (see \cite{Mather93}) on $\mathscr{A}_c$ by
\begin{equation*}
d_c(x,y)=h_c(x,y)+h_c(y,x)\,.
\end{equation*}
Two points  $x,y\in\mathscr{A}_c$ are said to be in the same {\em Aubry class} if $d_c(x,y)=0$.

\begin{Pro}\label{uniqueness}
Let $x,y\in\mathscr{A}_c$ be distinct points in the same Aubry class. Then $h_c(x,\cdot)$ equals $h_c(y,\cdot)$ up to a constant. If $x,y\in\mathscr{A}_c$, $x\ne y$, belong to different Aubry classes, then $h_c(x,\cdot)-h_c(y,\cdot)$ is not constant.
\end{Pro}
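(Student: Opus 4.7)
The plan is to deduce both statements from the triangle inequality
$$h_c(a,b)\leqslant h_c(a,w)+h_c(w,b)\qquad (a,b,w\in\T^n),$$
which follows immediately from the definition \eqref{Peierls'-barrier}: one splits a time interval $[0,t_1+t_2]$ at $t_1$, concatenates almost-minimizing curves from $a$ to $w$ on $[0,t_1]$ and from $w$ to $b$ on $[t_1,t_1+t_2]$, and notes that $\alpha(c)(t_1+t_2)$ splits additively, so taking $\liminf$ as $t_1,t_2\to\infty$ gives the claim. Also, for any $x\in\mathscr{A}_c$ one has $h_c(x,x)=0$ by definition of the projected Aubry set.

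For the first claim, fix $z\in\T^n$ and apply the triangle inequality through both $x$ and $y$:
$$h_c(y,z)\leqslant h_c(y,x)+h_c(x,z),\qquad h_c(x,z)\leqslant h_c(x,y)+h_c(y,z).$$
Rearranging, one obtains
$$-h_c(x,y)\ \leqslant\ h_c(y,z)-h_c(x,z)\ \leqslant\ h_c(y,x).$$
Since $x$ and $y$ lie in the same Aubry class, $d_c(x,y)=h_c(x,y)+h_c(y,x)=0$, so the lower and upper bounds coincide. Hence $h_c(y,z)-h_c(x,z)=h_c(y,x)$, a quantity independent of $z$, which is exactly the assertion that $h_c(x,\cdot)$ and $h_c(y,\cdot)$ differ by a constant.

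For the second claim, argue by contradiction: suppose that $h_c(x,\cdot)-h_c(y,\cdot)\equiv k$ for some $k\in\R$. Evaluating at $z=x$ yields $k=h_c(x,x)-h_c(y,x)=-h_c(y,x)$, and at $z=y$ yields $k=h_c(x,y)-h_c(y,y)=h_c(x,y)$. Equating the two expressions for $k$ gives $h_c(x,y)+h_c(y,x)=0$, i.e.\ $d_c(x,y)=0$, forcing $x$ and $y$ into the same Aubry class and contradicting the hypothesis.

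There is essentially no hard step: the argument is a routine unwinding of the definition of Aubry class once the triangle inequality is in hand. The only point that merits care is that the triangle inequality survives the $\liminf$ operation in \eqref{Peierls'-barrier} (this uses only that $\alpha(c)$ is a constant and that the two time parameters can be sent to infinity independently). Finiteness of $h_c$, which is needed throughout, is automatic on the compact torus $\T^n$ under the Tonelli hypotheses.
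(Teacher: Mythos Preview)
Your proof is correct. The paper itself does not give a proof of this proposition at all; it is stated as a known fact in the preliminary section (with no explicit citation for the proof), so there is nothing to compare against. Your argument via the triangle inequality for Peierls' barrier and the identity $h_c(x,x)=0$ on $\mathscr{A}_c$ is the standard one, and the handling of the $\liminf$ in the triangle inequality is fine once one chooses time sequences realizing the two $\liminf$'s on the right-hand side.
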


From Proposition \ref{uniqueness}, it follows that each Aubry class $A$ determines---up to constants---a viscosity solution of the form $h_c(y,x)$ for any $y\in A$. Now, suppose there exists a finite number of Aubry classes $A_1,\ldots,A_k$, and denote by $u_i$, $i=1,\ldots,k$, the corresponding uniquely determined viscosity solutions. Each of such solutions is called an {\em elementary weak KAM solution}\footnote{The concept of elementary weak KAM solution was introduced by Chong-Qing Cheng in an alternative way  in \cite{CCQ_Proc}, see also \cite{Fathi-Figalli-Rifford}.}. It is not hard to show that if there exists a unique Aubry class, then we can represent the barrier function $B^{\ast}_c$ in the form
\begin{equation}\label{u+u-}
B^{\ast}_c(x)=u_{c,y}^-(x)-u_{c,y}^+(x):=u_c^-(x)-u_c^+(x)\,.
\end{equation}
In this case, $(u_c^-,u_c^+)$  is called a conjugate pair of weak KAM solutions (see \cite{Fathi-book}).

Recall that $\mathcal{S}_-$ usually denotes the set of all viscosity solution $u_c^-$ of the Hamilton-Jacobi equation
\begin{equation}\label{eq:weak_KAM_eq}
H(x,c+Du(x))=\alpha(c).
\end{equation}
Setting $\breve{H}(x,p)=H(x,-p)$, it is clear that $\breve{H}$ is also a Tonelli Hamiltonian. Let us denote by $\mathcal{S}_+$ the set of  all viscosity solutions, $-u_c^+$,   of the corresponding Hamilton-Jacobi equation. Then $(u_c^-,u_c^+)$ is  a {\em conjugate pair of weak KAM solutions} if $u_c^-(x)=u_c^+(x)$ for any $x\in\mathscr{M}_c$.

Let us consider a conjugate pair $(u_c^-,u_c^+)$ of weak KAM solutions. We denote by $\mathcal{I}(u^-,u^+)$, the set
$$
\mathcal{I}(u_c^-,u_c^+)=\{x\in\T^n: u_c^-(x)=u_c^+(x)\}.
$$
We have $\mathcal{I}(u_c^-,u_c^+)\supset\mathscr{M}_c$. Under the assumption that there exists a unique Aubry class, it is easy to see that $\mathcal{I}(u_c^-,u_c^+)=\mathscr{A}_c$. In other words
$$
\mathscr{A}_c=\{x\in\T^n: B^*_c(x)=0\}.
$$

A set $\mathscr{L}$ of Tonelli Lagrangians is said to be {\em generic} (in the sense of Ma\~n\'e) if there exists a residual\footnote{Recall that, in a complete metric space, a subset is called residual if it is  the intersection of a countable  family of dense open subsets.} set $\mathcal{O}\subset C^2(\T^n)$ and a Tonelli Lagrangian $L_0$ such that  each $L\in\mathscr{L}$ has the form
\begin{equation*}
L=L_0+V
\end{equation*}
for some $V\in\mathcal{O}$. A similar notion can be given for a set of Tonelli Hamiltonians.

Examples of generic properties of interest to this paper are the following:
\begin{equation}\label{generic_condition}
\text{there exists a unique Aubry class in $\mathscr{A}_c$ for fixed $c$,}\tag{GC1}
\end{equation}
and
\begin{equation}\label{generic_condition_2}
\text{there exists a finite number of Aubry classes in $\mathscr{A}_c$ for all $c$.}\tag{GC2}
\end{equation}
Indeed, a well-known result by Ma\~n\'e~\cite{Mane} ensures that \eqref{generic_condition} holds for a generic family of Tonelli Hamiltonians. Consequently, for any fixed $c\in\R^n$, there is a unique viscosity solution of the equation associated with any Hamiltonian  of such a generic family. It is also known that \eqref{generic_condition_2} is a generic property (\cite{Bernard-Contreras}). 
In this case, for all $c\in \R^n$,  there exists a finite number of elementary weak KAM solutions.

It is well known that $u_c^-$ (resp. $u_c^+$) is a locally semiconcave (resp. semiconvex) function with linear modulus. Then the barrier function $B^{\ast}_c$ is also a locally seminconcave function with linear modulus, see, e.g., \cite[Proposition 2.1.5]{Cannarsa-Sinestrari}. Given any conjugate  pair $(u^-_c,u^+_c)$ of weak KAM solutions, one can lift the problem to the universal covering space $\R^n$ defining
\begin{equation}\label{v}
v^-(x)=u^-_c(x)+\langle c,x\rangle,\quad v^+(x)=u^+_c(x)+\langle c,x\rangle,\quad x\in \R^n.
\end{equation}
Then, under the generic condition \eqref{generic_condition}, we have
$$
B^*_c(x)=v^-(x)-v^+(x),\quad x\in \R^n.
$$

It is worth noting that both $D^*v^-(x)$ and $D^*v^+(x)$ are contained in the corresponding energy surface, i.e
\begin{equation}\label{eq:level_set_H}
\begin{split}
H(x,p)&=\alpha(c),\quad p\in D^*v^{\pm}(x),\\
H(x,p)&<\alpha(c),\quad p\in D^{\mp}v^{\pm}(x)\setminus D^*v^{\pm}(x).
\end{split}
\end{equation}
Indeed, the former assertion of \eqref{eq:level_set_H} follows directly from the definition of $D^*v^{\pm}(x)$ and the fact that the equation holds at all points of differentiability. In order to justify the latter, one just need to combine the inclusions
$$D^{\mp}v^{\pm}(x)\subset\{p:H(x,p)\leqslant\alpha(c)\}$$
with the property $\mathrm{Ext}\,D^{\mp}v^{\pm}(x)=D^*v^{\pm}(x)$ (see Proposition \ref{Ext_and_reachable}) and the strict convexity of $H(x,\cdot)$.

\section{connecting orbits and critical points of barrier functions}

\subsection{A criterion on Homoclinic orbits}
We call $x\in\R^n$ a (generalized) {\em critical point} of a locally semiconcave function $u$ if $0\in D^+u(x)$. Moreover, $x$ is called a critical point of {\em saddle type} if $0\in D^+u(x)$ and $x$  is not a local minimum or maximum point of $u$.

In \cite{Cannarsa-Cheng-Zhang}, we proved the following criterion for the existence of homoclinic orbits with respect to the Aubry set under a certain condition on limiting differentials.
\begin{Pro}\label{homoclinic_orbit}
Let $x\in\Sigma_{B^{\ast}_c}$, and let $B^{\ast}_c(x)=u^-_c(x)-u^+_c(x)$ where $(u^-_c,u^+_c)$ is a conjugate pair of weak KAM solutions. If
\begin{equation}\label{nonempty}
D^{\ast}{u^-_c}(x)\cap D^{\ast}{u^+_c}(x)\not=\varnothing,
\end{equation}
then there exists a minimal homoclinic orbit with respect to the Aubry set $\tilde{\mathscr{A}}_c$ passing through $x$.
\end{Pro}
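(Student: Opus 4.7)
The plan is to splice together two one-sided calibrated curves into a bi-infinite extremal through $x$, and then invoke weak KAM theory to place its $\alpha$- and $\omega$-limit sets inside $\tilde{\mathscr{A}}_c$. Fix $p\in D^{\ast}u^-_c(x)\cap D^{\ast}u^+_c(x)$. Applying Proposition~\ref{reachable_grad_and_backward} to the weak KAM solution $u^-_c$ produces a $C^1$ backward-calibrated curve $\gamma^-:(-\infty,0]\to\T^n$ with $\gamma^-(0)=x$ and $p=\frac{\partial L_c}{\partial v}(x,\dot\gamma^-(0))$. Applying the same proposition to $u^+_c$ (which, as conjugate partner, is a backward weak KAM solution for the reversed Hamiltonian $\breve H(x,p)=H(x,-p)$, and hence a forward weak KAM solution for $H$) yields a $C^1$ forward-calibrated curve $\gamma^+:[0,\infty)\to\T^n$ with $\gamma^+(0)=x$ and $p=\frac{\partial L_c}{\partial v}(x,\dot\gamma^+(0))$.

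Because strict convexity of $L$ in the fibre variable, assumption (L2), makes the Legendre map $v\mapsto\frac{\partial L_c}{\partial v}(x,v)$ a diffeomorphism, the common value of $p$ forces $\dot\gamma^-(0)=\dot\gamma^+(0)$. The concatenation $\gamma:\R\to\T^n$ is therefore a $C^1$ curve with matched velocity at $t=0$; since each half is an extremal of $L_c$ and both one-sided initial conditions $(\gamma(0),\dot\gamma(0))$ agree, uniqueness for the Euler--Lagrange flow bootstraps $\gamma$ to a $C^2$ bi-infinite solution. To control its endpoints at infinity, one appeals to the standard weak KAM fact that any backward-calibrated curve of $u^-_c$, lifted to $T\T^n$ via $t\mapsto(\gamma(t),\dot\gamma(t))$, has $\alpha$-limit contained in $\tilde{\mathscr{A}}_c$, with the symmetric statement for forward-calibrated curves of $u^+_c$ and $\omega$-limits; this is extracted from the domination inequalities $u^{\pm}_c\prec L_c+\alpha(c)$ together with the identification $\mathscr{A}_c=\{B^*_c=0\}$ available under the conjugate-pair hypothesis.

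It remains to upgrade $\gamma$ to a \emph{minimal} orbit in Mather's sense. Combining the two calibrations on any interval $[a,b]$ with $a\leqslant 0\leqslant b$ gives the identity
\[
\int_a^b L_c(\gamma,\dot\gamma)\,ds+\alpha(c)(b-a)=u^-_c(x)-u^-_c(\gamma(a))+u^+_c(\gamma(b))-u^+_c(x),
\]
and one compares it against an arbitrary competitor $\eta:[a,b]\to\T^n$ with $\eta(a)=\gamma(a)$, $\eta(b)=\gamma(b)$, using the domination $u^{\pm}_c\prec L_c+\alpha(c)$ applied to $\eta$ together with the pointwise inequality $u^-_c\geqslant u^+_c$ (equivalently $B^*_c\geqslant 0$).

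The principal obstacle lies in this last step. A competitor $\eta$ is not obliged to pass through $x=\gamma(0)$, so its action cannot be split at $t=0$ in parallel with the decomposition that underlies $\gamma$'s calibration identity. The argument therefore has to exploit the full conjugate-pair structure rather than calibration alone: the inequality $u^-_c-u^+_c\geqslant 0$ with equality exactly on $\mathscr{A}_c$, and the crucial fact that the gluing momentum $p$ lies in the limiting differentials $D^{\ast}u^{\pm}_c(x)$ (rather than merely in the superdifferentials), so that $\gamma$ is globally action-minimizing between arbitrary pairs of its points, as required of a minimal homoclinic orbit with respect to $\tilde{\mathscr{A}}_c$.
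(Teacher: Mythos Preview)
The paper does not supply its own proof of this proposition; it is quoted from the authors' earlier work \cite{Cannarsa-Cheng-Zhang}. Your construction---take a common $p\in D^{\ast}u^-_c(x)\cap D^{\ast}u^+_c(x)$, use Proposition~\ref{reachable_grad_and_backward} to produce a backward $(u^-_c,L_c,\alpha(c))$-calibrated curve and a forward $(u^+_c,L_c,\alpha(c))$-calibrated curve, match velocities via the Legendre transform, and invoke the weak KAM fact that calibrated rays accumulate on $\tilde{\mathscr{A}}_c$---is exactly the standard argument and almost certainly what the cited reference does.

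Your only misstep is in the last two paragraphs, and it is a misreading rather than a mathematical error. You treat ``minimal'' as meaning globally action-minimizing between arbitrary pairs of its points (Mather's class $\mathcal{A}$ minimizers), and then correctly identify that this would be hard to verify because a competitor $\eta$ need not pass through $x$. But look at the definition actually used in this paper (opening of Section~3.4): a homoclinic orbit is called \emph{minimal} precisely when there exists $t_0$ such that $\gamma$ is backward calibrated on $(-\infty,t_0]$ and forward calibrated on $[t_0,+\infty)$. That is exactly what your spliced curve satisfies with $t_0=0$, so the proof is already complete once you have glued the two halves and placed the limit sets in $\tilde{\mathscr{A}}_c$. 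The ``principal obstacle'' you flag does not exist for the statement as formulated here; delete the final two paragraphs and you are done.
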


It is clear that condition \eqref{nonempty} is equivalent to 
\begin{equation}\label{0_in_reachable}
0\in D^{\ast}{u^-_c}(x)-D^{\ast}{u^+_c}(x)=D^{\ast}{v^-}(x)-D^{\ast}{v^+}(x),
\end{equation}
where $v^\pm$ is defined in \eqref{v}. Moreover, if \eqref{0_in_reachable} is satisfied, then $x$ must be a critical point of $B^*_c$. Notice that the fact that $x$ is a singular point of $B^*_c$ is inessential here.

\begin{ex}
Let $L(x,v)=\frac 12|v|^2-(1-\cos x)$ be a one-dimensional pendulum system. For $c=0$, $\mathscr{A}_0=\{2k\pi\}$, and $x_k=(2k+1)\pi$, $k\in\Z$ are singular points of the unique weak KAM solution $u^-_0$ up to constants. It is clear that $u^+_0=-u^-_0$, and the barrier function $B^*_0=2u^-_0$. Since $D^+u^-_0(x_k)=D^-u^+_0(x_k)=[-2,2]$, we have $0\in D^*u^-_0(x_k)-D^*u^+_0(x_k)$, and the two types of separatrices give the expected homoclinic orbits.
\end{ex}

\begin{defn}
A vector $\theta\in\R^n$ belongs to the {\em contingent cone} (or {\em Bouligand's tangent cone}) $T_S(x)$ iff there exist sequences $\theta_i\in \R^n$, converging to $\theta$, and $t_i\in\R^+$, decreasing to $0$, such that
$$
x+t_i\theta_i\in S\,,\quad \forall i\geqslant 1\,.
$$
The vector space generated by $T_S(x)$ is called the {\em tangent space} to $S$ at $x$ and is denoted by $\text{Tan}(x,S)$.
\end{defn}

We define the superlevel set of $B^*_c$ with respect to a given $x\in\R^n$ as
$$
\Lambda^+_x=\{y\in\R^n: B^*_c(y)\geqslant B^*_c(x)\}.
$$
The following criterion gives sufficient conditions for \eqref{0_in_reachable}  to hold true.
\begin{The}\label{criterion_I}
Let $x\in\R^n$ be a critical point of $B^*_c$. Then we have that $$0\in D^{\ast}{u^-_c}(x)-D^{\ast}{u^+_c}(x)$$ if any of the following conditions is satisfied:
\begin{enumerate}[(a)]
  \item at least one of the two solutions $v^-$ and $v^+$ is differentiable at $x$,
  \item the tangent space to $\Lambda^+_x$ at $x$ is such that
\begin{equation}\label{non-degen}
\mbox{\rm dim}\big(\mbox{\rm Tan}(x,\Lambda^+_x)\big)\geqslant n-1,
\end{equation}
\item $n=2$ and there exists a unit vector $\theta$ such that $\langle p,\theta\rangle\geqslant 0$ for all $p\in D^+B^*_c(x)$,
\item $n=2$ and $x$ is not an isolated critical point of $B^*_c$.\end{enumerate}
\end{The}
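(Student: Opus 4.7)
The plan is to leverage the sum rule for superdifferentials of semiconcave functions (applicable since both $v^-$ and $-v^+$ are semiconcave), giving the identity
\[
D^+B^*_c(x) = D^+v^-(x) - D^-v^+(x).
\]
The critical point condition $0 \in D^+B^*_c(x)$ is then equivalent to $D^+v^-(x) \cap D^-v^+(x) \neq \varnothing$, while the target $0 \in D^*v^-(x) - D^*v^+(x)$ is the sharper statement that these two sets share a point on the energy surface $\{H(x,\cdot) = \alpha(c)\}$. By the dichotomy in \eqref{eq:level_set_H} together with Proposition~\ref{Ext_and_reachable}, any common point that lies on this surface automatically belongs to $D^*v^-(x) \cap D^*v^+(x)$, so in each case the task reduces to producing such a point.

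Case (a) is immediate: differentiability of $v^+$ at $x$ forces $D^-v^+(x) = \{Dv^+(x)\}$, and the viscosity property places $Dv^+(x)$ on the energy level. For case (d), I would apply the semiconcavity inequality at both $x$ (with any $p \in D^+B^*_c(x)$) and an accumulating critical point $x_k$ (with $0 \in D^+B^*_c(x_k)$), then divide by $|x_k - x|$ and pass to a limit direction $\theta$ of the chord $(x_k - x)/|x_k - x|$; the outcome $\langle p, \theta\rangle \geqslant 0$ for every $p \in D^+B^*_c(x)$ is precisely hypothesis (c). An analogous one-sided estimate along sequences in $\Lambda^+_x$ approaching $x$ within the tangent cone yields $\langle p, \theta\rangle \geqslant 0$ for every $\theta \in T_{\Lambda^+_x}(x)$; under the near-maximal tangent-space hypothesis of (b), this constraint is strong enough that either $D^+B^*_c(x) = \{0\}$---in which case the Minkowski identity forces both $v^\pm$ to be differentiable at $x$ and (a) applies---or else a transverse unit direction survives to play the role of $\theta$ as in (c).

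The heart of the argument is case (c). Rewriting the hypothesis via the Minkowski-difference identity yields the separation
\[
\min_{p^- \in D^+v^-(x)} \langle p^-, \theta\rangle \;\geqslant\; \max_{p^+ \in D^-v^+(x)} \langle p^+, \theta\rangle,
\]
and the non-empty intersection forces equality, placing a common element $p^*$ in the exposed faces $F^- \subset D^+v^-(x)$ and $F^+ \subset D^-v^+(x)$. In dimension $n = 2$ each face is either a point or a segment; by Proposition~\ref{Ext_and_reachable} its extreme points lie in $D^*v^\mp(x) \subset \{H(x, \cdot) = \alpha(c)\}$. The strict convexity of $\{H(x,\cdot) \leqslant \alpha(c)\}$ makes the supporting line meet the energy level in exactly two points $\{q_1, q_2\}$, so the extreme points of both $F^\pm$ are drawn from this pair, and at least one of $q_1, q_2$ must therefore lie in $F^- \cap F^+$, producing the sought-for point on the energy surface. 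The main obstacle lies exactly in this planar face-combinatorics step---it is what pins (c) and (d) to $n = 2$---while in (b) the delicate part is checking that the tangent-space constraint leaves no transverse slack that would keep the common element strictly inside the open sublevel set $\{H(x,\cdot) < \alpha(c)\}$.
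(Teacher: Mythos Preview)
Your argument for cases (a), (c), and (d) is correct and matches the paper's proof essentially line for line: the paper also reduces (d) to (c) via the semiconcavity inequality at the accumulating critical points with $p_j=0$, and handles (c) by the same ``two-point'' combinatorics on the line $\{\langle p,\theta\rangle=\lambda\}$ intersected with the strictly convex energy surface $\partial Z_{x,E}$.

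The one place your sketch is genuinely thinner than the paper is case (b) for general $n$. The paper does \emph{not} reduce (b) to a single-direction statement like (c); instead it keeps all $n-1$ independent tangent directions $\theta^1,\dots,\theta^{n-1}$, derives for each a separation $\langle\theta^i,p^-\rangle\geqslant\lambda_i\geqslant\langle\theta^i,p^+\rangle$, and observes that the common point $p^-=p^+$ is forced onto the affine line $\ell=\bigcap_i\{p:\langle\theta^i,p\rangle=\lambda_i\}$. Only then does the two-point argument apply: $\ell\cap\partial Z_{x,E}$ has at most two elements, and the extreme points of $\ell\cap D^+v^-(x)$ and $\ell\cap D^-v^+(x)$ must be drawn from this pair, forcing a common extreme point. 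Your dichotomy ``either $D^+B^*_c(x)=\{0\}$ or a transverse unit direction survives to play the role of $\theta$ as in (c)'' does not capture this: a single half-space constraint in $\R^n$ with $n\geqslant 3$ leaves faces that can be $(n-1)$-dimensional, and the endpoint argument of (c) fails. (Note also that the tangent constraint is only one-sided---it holds on $T_{\Lambda^+_x}(x)$, not on its linear span---so you cannot conclude $D^+B^*_c(x)\subset(\mathrm{Tan})^\perp$.) The fix is simply to run your (c)-argument with the supporting line replaced by the line $\ell$; once you make that adjustment, your proof and the paper's coincide.
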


\begin{Rem}\label{re:MP}
\rm Notice that, when $n=2$, condition $(c)$ above is satisfied whenever $x$ is not a local maximum point of $B^*_c$.
\end{Rem}
\begin{proof}
Let $x\in\R^n$ and let $0\in D^+{u^-_c}(x)-D^-{u^+_c}(x)=D^+v^-(x)-D^-v^+(x)$.

First, suppose condition (a) holds. Without loss of generality, we can assume that $v^+$ is differentiable at $x$ with $Dv^+(x)=p^+$, i.e., $Dv^+(x)=\{p^+\}$. Then, $D^+B^*_c(x)=D^+v^-(x)-Dv^+(x)$ by the sum rule for the superdifferential of concave functions and $p^+\in D^+v^-(x)$ because $x$ is a critical point of $B^*_c$. Now, 
\begin{equation}\label{eq:energy_surface}
Z_{x,E}:=\{p\in T_x^*M: H(x,p)\leqslant E\}
\end{equation}
is a nonempty compact convex set and $\partial Z_{x,E}$ is $C^2$ smooth or a singleton, under the energy condition $E=\alpha(c)$. 
Since $p^+\in \partial Z_{x,E}$, we conclude that $$p^+\in D^+v^-(x)\cap\partial Z_{x,E}= D^*v^-(x).$$
So, there exists $p^-\in D^*v^-(x)$ such that $p^-=p^+$, or $0\in D^*v^-(x)-D^*v^+(x)$.

We now assume condition (b). Then there exist linearly independent unit vectors $\{\theta^i_x\}_{i=1}^{n-1}\subset T_{\Lambda^+_x}(x)$.
By the semiconcavity of $B^*_c$, for every $i=1,\dots,n-1$ there exists a sequence $\{x^i_k\}\subset\Lambda^+_x$, converging to $x$  as $k\to\infty$, such that
$$
B^*_c(x)\leqslant B^*_c(x^i_k)\leqslant B^*_c(x)+\langle p,x^i_k-x\rangle+\frac C2|x^i_k-x|^2,\quad\forall p\in D^+B^*_c(x).
$$
This implies that there exist  limiting vectors $\theta^i$ of $\{(x^i_k-x)/|x^i_k-x|\}$,such that
\begin{equation}\label{eq:half_space}
\langle\theta^i, p\rangle\geqslant 0,\quad\forall p\in D^+B^*_c(x)\quad(i=1,\dots,n-1).
\end{equation}
Consequently, there exist $\lambda_i\in\R\;(i=1,\dots,n-1)$ such that

\begin{align*}
\langle\theta^i, p^-\rangle&\geqslant\lambda_i,\quad \forall\,p^-\in D^+v^-(x)\\
\langle\theta^i, p^+\rangle&\leqslant\lambda_i,\quad \forall\, p^+\in D^-v^+(x).
\end{align*}
Let $\ell$ be the intersection of the hyperplanes $\Pi_i=\{p: \langle\theta^i,p\rangle=\lambda_i\}$, $i=1,\ldots,n-1$, and observe that  $\ell$ is a straight line because $\{\theta^i\}_{i=1}^{n-1}$ are linearly independent. Since $0\in D^+B^*_c(x)$, there exist covectors $p^-\in D^+v^-(x)$ and $p^+\in D^-v^+(x)$ such that $p^-=p^+$. If $p^-\not\in D^*v^-(x)$, then there exist  $p^-_1,p^-_2\in\text{Ext}(\ell\cap D^+v^-(x))$
such that $p^-$ is in the interior of the line segment $[p_1^-,p_2^-]$\footnote{Recall that,  given any convex set $C\subset\R^n$ and supporting hyperplane $H$, the extremal points of $C\cap H$ are still extremal points of $C$ (see, e.g., \cite[Lemma 2.7.1]{Borwein-Vanderwerff}).}.  We also have $\ell\cap\partial Z_{x,E}=\{p^-_1,p^-_2\}$ since all the extremal points of $D^+v(x)$ are contained in $\partial Z_{x,E}$. Similarly, if $p^+\not\in D^*v^+(x)$, then there exist covectors $p^+_1,p^+_2\in\ell\cap\text{Ext} (D^-v^+(x))$ such that $\ell\cap\partial Z_{x,E}=\{p^+_1,p^+_2\}$. This implies that $\text{Ext} (D^+v^-(x))\cap \text{Ext} (D^-v^+(x))\not=\varnothing$. So, recalling the equality $\text{Ext} (D^{\pm}v^{\mp}(x))=D^*v^{\mp}(x)$ once again (see Proposition~\ref{Ext_and_reachable}), we conclude that $0\in D^*v^-(x)-D^*v^+(x)$.

Next, observe that  condition (c) is just a special case of (b).

Finally, suppose $x$ is not an isolated critical point of $B^*_c$ as in condition (d). Then there exists a sequence of critical points $y_j$ converging to $x$. By the semiconcavity of $B^*_c$,  for any $p\in D^+B^*_c(x)$ and $p_j\in D^+B^*_c(y_j)$ we have that
\begin{align*}
B^*_c(y_j)&\leqslant B^*_c(x)+\langle p,y_j-x\rangle+\frac C2|y_j-x|^2\\
B^*_c(x)&\leqslant B^*_c(y_j)+\langle p_j,x-y_j\rangle+\frac C2|y_j-x|^2.
\end{align*}
Choosing $p_j=0$ for all $j$ since each $y_j$ is a critical point of $B^*_c$, and combining the two inequalities above, we have
$$
0\leqslant\langle p,y_j-x\rangle+C|y_j-x|^2,\quad\forall p\in D^+B^*_c(x).
$$
This means there exists a unit vector $\theta$ which satisfies condition (c).
\end{proof}

A celebrated result in the theory of differential dynamical systems from the sixties is Smale's theorem on transversal homoclinic points which describes, in particular, the complicated dynamical behavior produced by Smale's horseshoe. When the Aubry set $\tilde{\mathscr{A}}_c$ is composed of hyperbolic fixed points or periodic orbits, the ``non-degenerate'' condition \eqref{non-degen} in Theorem \ref{criterion_I} is closely linked to how the unstable submanfold $\{(x,Dv^-(x)): x\in\R^n\}$ and the stable submanifold $\{(x,Dv^-(x)): x\in\R^n\}$ intersect.

In general, it is hard to tell whether a critical point $x$ of $B^*_c$ is a singular point or a regular one although, by semiconcavity, each local minimum point of $B^*_c$ must be regular. In the special case when $n=2$ and $B^*_c$ is of class $C^2$ in a neighborhood, $B(x,\varepsilon)$, of an isolated critical point $x$, condition \eqref{non-degen} yields the following dichotomy:
\begin{enumerate}[1)]
  \item $x$ {\em is a non-degenerate critical point of }$B^*_c$.

 \noindent In this case, the local unstable submanfold $\{(x,Dv^-(x)): x\in B(x,\varepsilon)\}$ and the local stable submanifold $\{(x,Dv^-(x)): x\in B(x,\varepsilon)\}$ intersect transversally, and it is clear that $\mbox{\rm dim}(\mbox{\rm Tan}(x,\Lambda^+_x))=n$.

  \item $x$ {\em is a degenerate critical point of $B^*_c$ such that $D^2B^*_c(x)$ has exactly one eigenvalue equal to 0 with one-dimensional eigenspace.}

\noindent  In this case, the phenomenon of homoclinic tangency may happen and 
$x$ can be a cusp point of the level set, with $\mbox{\rm dim}(\mbox{\rm Tan}(x,\Lambda^+_x))=n-1$.
\end{enumerate}
It is interesting to compare this analysis to the result in \cite{Pujals-Sambarino}, where some hyperbolic assumption on the limit sets of the homoclinic orbits is required.

\subsection{Lasry-Lions regularization}
It is clear that, in the case of $\T^n$, we can regard $u_c^\pm$, as well as $B=B^*_c=v^--v^+$, as $\Z^n$-periodic locally semiconcave functions on $\R^n$. Now we recall the regularization technique,  known as sup/inf convolution,  which is due to Lasry~and~Lions~\cite{Lasry-Lions}. A detailed formulation of this  method in the finite dimensional case can be found in \cite{Attouch}. 



For any semiconcave function $u^-:\R^n\to\R$, any semiconvex function $u^+:\R^n\to\R$, and any $\lambda>0$, we define
\begin{align}
u^-_{\lambda}(x)&=\sup_{y\in\R^n}\{u^-(y)-\frac 1{2\lambda}{|x-y|^2}\},\label{L-L regularity}\\
u^+_{\lambda}(x)&=\inf_{y\in\R^n}\{u^+(y)+\frac 1{2\lambda}{|x-y|^2}\}.
\end{align}
The following result characterizes the fundamental approximation properties of $u^\pm$ by $u^\pm_{\lambda}$ when $\lambda>0$ is small enough. For the reader's convenience, we provide a new proof of such properties below.

\begin{Pro}\label{same_critical}
Suppose $u^-:\R^n\to\R$ is a semiconcave function with constant $C$. Then, for every $0<\lambda\leqslant\lambda_0$, $0<\lambda_0<<1$, the function $u^-_{\lambda}$ in \eqref{L-L regularity} satisfies the following.
\begin{enumerate}[(P1)]
  \item $u^-_{\lambda}$ is of class $C^{1,1}(\R^n)$.
  \item As $\lambda\searrow 0$, $u^-_{\lambda}$ decreases to $u^-$ and $Du^-_{\lambda}\to D^+u^-$ in the graph sense.
\item $\lim_{\lambda\to0}Du^-_{\lambda}(x)=p_x$, where $p_x$ is the element of minimal norm of $D^+u^-(x)$.  
  \item The functions $u^-$ and $u^-_{\lambda}$ have the same critical points and critical values when $\lambda<\min\{\lambda_0,C^{-1}\}$.
  \item In particular, there exists $0<\lambda_1\leqslant \lambda_0$ such that $u^-$ and $u^-_{\lambda}$ have the same local maximum points  when $\lambda\in (0,\lambda_1]$.
\end{enumerate}
\end{Pro}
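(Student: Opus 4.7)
The plan is to prove (P1)--(P5) in sequence. The common engine is that, when $\lambda<1/C$, the auxiliary functional
\begin{equation*}
\Phi_x(y):=u^-(y)-\frac{1}{2\lambda}|x-y|^2
\end{equation*}
is strongly concave with modulus $\frac{1}{\lambda}-C>0$, so the supremum in \eqref{L-L regularity} is attained at a unique maximizer $y_\lambda(x)$. Standard stability of strongly concave argmaxes yields that $x\mapsto y_\lambda(x)$ is Lipschitz, and a Danskin/envelope computation gives
\begin{equation*}
Du^-_\lambda(x)=\frac{y_\lambda(x)-x}{\lambda},
\end{equation*}
so $u^-_\lambda\in C^{1,1}(\R^n)$, settling (P1). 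Simultaneously, the first-order optimality at $y_\lambda(x)$ delivers the inclusion $Du^-_\lambda(x)\in D^+u^-(y_\lambda(x))$, a fact I will use repeatedly below.

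For (P2), the inequality $u^-_\lambda\geqslant u^-$ comes from testing $y=x$ in the supremum, and monotonicity of $\lambda\mapsto u^-_\lambda(x)$ is immediate from \eqref{L-L regularity}. Using the local Lipschitz continuity of $u^-$, the estimate
\begin{equation*}
\frac{1}{2\lambda}|y_\lambda(x)-x|^2\leqslant u^-(y_\lambda(x))-u^-(x)\leqslant L\,|y_\lambda(x)-x|
\end{equation*}
forces $|y_\lambda(x)-x|=O(\lambda)$, so that $y_\lambda(x)\to x$ and $u^-_\lambda(x)\searrow u^-(x)$. Graph convergence of $Du^-_\lambda$ to $D^+u^-$ then follows from the inclusion above together with the upper semicontinuity in Proposition~\ref{basic_facts_of_superdifferential}(b). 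To upgrade this to (P3), I would use a comparison in the spirit of the Moreau--Yosida analysis of the convex representative $f=\frac{C}{2}|\cdot|^2-u^-$; this yields a bound of the form $|Du^-_\lambda(x)|\leqslant|p|+o(1)$ for every $p\in D^+u^-(x)$, and by strict convexity of the Euclidean norm on the convex set $D^+u^-(x)$, the only limit point of $Du^-_\lambda(x)$ compatible with this bound and with membership in $D^+u^-(x)$ is the minimal-norm element $p_x$.

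For (P4), the decisive step applies Proposition~\ref{criterion-Du_semiconcave} with $p=0$ at a critical point $x$ of $u^-$:
\begin{equation*}
u^-(y)-\frac{1}{2\lambda}|y-x|^2\leqslant u^-(x)+\frac{1}{2}\Bigl(C-\frac{1}{\lambda}\Bigr)|y-x|^2\leqslant u^-(x)
\end{equation*}
whenever $\lambda<1/C$, with equality only at $y=x$; hence $y_\lambda(x)=x$, $Du^-_\lambda(x)=0$, and $u^-_\lambda(x)=u^-(x)$. Conversely, $Du^-_\lambda(x)=0$ forces $y_\lambda(x)=x$, and the first-order inclusion then gives $0\in D^+u^-(x)$; critical values agree by the same identity. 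For (P5), a local maximum $x_0$ of $u^-_\lambda$ is automatically a critical point by (P1), so $u^-_\lambda(x_0)=u^-(x_0)$; combined with $u^-\leqslant u^-_\lambda$, this makes $x_0$ a local maximum of $u^-$. For the reverse direction, if $x_0$ is a local maximum of $u^-$ on $B(x_0,r)$, the uniform $O(\lambda)$ displacement bound for $y_\lambda$ from the proof of (P2) places $y_\lambda(y)$ in $B(x_0,r)$ whenever $y$ is sufficiently close to $x_0$ and $\lambda\leqslant\lambda_1$ for an appropriate $\lambda_1\leqslant\lambda_0$; then $u^-_\lambda(y)\leqslant u^-(y_\lambda(y))\leqslant u^-(x_0)=u^-_\lambda(x_0)$, completing the argument.

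The main obstacle is the quantitative identification in (P3): graph convergence is easy, but isolating the minimal-norm element requires a sharp one-sided norm estimate, and the factor of $2$ one would get from a naive Cauchy--Schwarz on the strong-concavity inequality must be improved, either via the Moreau--Yosida comparison just indicated or via a monotonicity-type bound for $D^+u^-$ inherited from the decomposition $u^-=\frac{C}{2}|\cdot|^2-f$. The uniformity needed in (P5) is a straightforward consequence of the Lipschitz continuity of $y_\lambda$ established in (P1).
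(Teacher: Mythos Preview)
Your plan is correct and matches the paper's proof in structure and detail for (P1), (P2), (P4), and (P5). For (P3)---which you rightly identify as the crux---the paper does not go through the Moreau--Yosida regularization of the convex representative but instead executes exactly the ``monotonicity-type bound'' you list as the second alternative: it applies the semiconcavity inequality of Proposition~\ref{criterion-Du_semiconcave} once at $y_\lambda$ with vector $v_\lambda:=(y_\lambda-x)/\lambda\in D^+u^-(y_\lambda)$ and once at $x$ with an arbitrary $p\in D^+u^-(x)$, then chains the two to obtain
\[
\langle p-v_\lambda,\,v_\lambda\rangle+\lambda C\,|v_\lambda|^2\geqslant 0.
\]
This both gives boundedness of $\{v_\lambda\}$ and, along any convergent subsequence $v_\lambda\to v_0\in D^+u^-(x)$, yields $\langle p,v_0\rangle\geqslant|v_0|^2$ for every $p\in D^+u^-(x)$, which is precisely the variational characterization of the minimal-norm element of the convex set $D^+u^-(x)$. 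No factor of $2$ arises, so the obstacle you anticipated dissolves once you use the two-sided semiconcavity estimate rather than the one-sided strong-concavity bound. The same inequality with $p=0$ is how the paper dispatches (P4), recovering your displayed estimate in one line.
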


\begin{proof} Hereafter, we drop the minus superscript and write simple $u, u_\lambda$ instead of $u^-, u^-_\lambda$.
It is worth noting that the definition of $u_{\lambda}$ in \eqref{L-L regularity} is actually a local one, that is the supremum, in fact the maximum, is taken in some ball $B(x,\rho)$, where $\rho$ only depends on $\lambda$ and $x$ in our case (see, e.g., \cite[Lemma 3.5.2]{Cannarsa-Sinestrari}).

Properties (P1) and (P2), for $0<\lambda<\lambda_0$, $0<\lambda_0<<1$, can be derived directly from \cite{Cannarsa-Sinestrari} except for the fact  that $Du_{\lambda}\to D^+u$ in the graph sense. This last property follows from the fact that the semiconcavity costant of $u_\lambda$ is uniform for $\lambda $ sufficiently small.

We proceed to prove (P3). For fixed $x\in\R^n$, let
$$
F(y,x)=u(y)-\frac 1{2\lambda}|x-y|^2 \qquad (y\in \bar{B}(x,\rho)) 
$$
and set
\begin{align*}
M(x)&=\{y\in \bar{B}(x,\rho): u_{\lambda}(x)=F(y,x)\}\\
Y(x)&=\{D_xF(y,x): y\in M(x)\}.
\end{align*}
We have that $D^+u_{\lambda}(x)=\text{co}\,Y(x)$, the convex hull of $Y(x)$ (see \cite[Theorem 3.4.4]{Cannarsa-Sinestrari}). If $0<\lambda\leqslant\lambda_0$ for $\lambda_0$ small enough, $u_{\lambda}$ is of class $C^{1,1}$ (see \cite[Theorem 3.5.3]{Cannarsa-Sinestrari}). In this case, it is clear that $Y(x)$ is a singleton, and so is $M(x)$. Set $M(x)=\{y_{\lambda}\}$ and $Y(x)=\{v_{\lambda}\}$ where $v_{\lambda}=(y_{\lambda}-x)/\lambda$. We note that $y_\lambda\in B(x,\rho)$ for $\lambda$ small enough.
Since $F(y,x)$ attains its maximum at $y=y_{\lambda}$, we have that $v_{\lambda}\in D^+u(y_{\lambda})$. By the semiconcavity of $u$, for any $p\in D^+u(x)$, we have
\begin{align*}
u(x)&\leqslant u(y_{\lambda})+\langle v_{\lambda}, x-y_{\lambda}\rangle+\frac C2|x-y_{\lambda}|^2\\
    &\leqslant u(x)+\langle p,y_{\lambda}-x\rangle+\langle v_{\lambda}, x-y_{\lambda}\rangle+C|x-y_{\lambda}|^2.
\end{align*}
Then,
\begin{equation}\label{basic_inequalty}
\langle p-v_{\lambda}, v_{\lambda}\rangle+\lambda C|v_{\lambda}|^2\geqslant 0,\quad \forall p\in D^+u(x).
\end{equation}
In view of \eqref{basic_inequalty}, it is easily checked that $\{v_{\lambda}\}$ is bounded when $0<\lambda\leqslant\lambda_0$. Without loss of generality, we suppose $v_{\lambda_k}\to v_0$ as $\lambda_k\to0$. So, taking the limit in \eqref{basic_inequalty} yields
$$
\langle p,v_0\rangle\geqslant\langle v_0,v_0\rangle,\quad \forall p\in D^+u(x).
$$
In other words,
$p_0=v_0$ is the unique element of minimal norm of $D^+u(x)$.
Since $p_0$ is independent of the choice of $v_{\lambda_k}$, we have that $\lim_{\lambda\to0}v_{\lambda}=v_0$ and so $\lim_{\lambda\to0}y_{\lambda}=x$, which completes the proof of (P3).

For the proof of (P4), note that if $x$ is a critical point of $u$, taking $p=0$ in \eqref{basic_inequalty} we have
$$
(\lambda C-1)|v_{\lambda}|^2\geqslant 0.
$$
It follows that $v_{\lambda}\equiv0$ for $0<\lambda<C^{-1}$, which means $x$ is also a critical point of $u_{\lambda}$. In this case, $y_{\lambda}\equiv x$ and $u_{\lambda}(x)=u(x)$  for $0<\lambda<C^{-1}$. Conversely, if $x$ is a critical point of $u_{\lambda}$, then $0=Du_{\lambda}(x)$, i.e., $\frac{y_\lambda-x}{\lambda}=v_\lambda=0$, which implies $y_\lambda=x$, and so, $0\in D^+u(x)$ and $u(x)=u_\lambda(x)$.

To prove (P5), we suppose $x_0$ is a local maximum point of $u$, i.e., $u(x_0)\geqslant u(x)$, for any $x\in B(x_0,\varepsilon)$. Using (P4), we get
$$
u_{\lambda}(x_0)=u(x_0)\geqslant u(z)\geqslant u(z)-\frac{|z-x|^2}{2\lambda},\quad z\in B(x_0,\varepsilon).
$$
When $\lambda>0$ is small enough such that the maximum of $u(\cdot)-\frac{|\cdot-x|^2}{2\lambda}$ is achieved in $B(x_0,\varepsilon)$. this implies $u_\lambda(x_0)\geqslant u_\lambda(x)$, for all $x\in B(x_0,\varepsilon)$. Conversely, if $x_0$ is a local maximum point of $u_\lambda$, i.e., $u_\lambda(x_0)\geqslant u_\lambda(x)$, for any $x\in B(x_0,\varepsilon)$. Then, Using (P4) again, we have
$$
u(x_0)=u_\lambda(x_0)\geqslant u_\lambda(x)\geqslant u(x),\quad x\in B(x_0,\varepsilon).
$$
\end{proof}


\subsection{Critical points of barrier functions}
Recalling the local semiconcavity of the barrier function $B^*_c(x)$, let $B_{\lambda}$ ($0<\lambda<\lambda_0$ small enough) be the corresponding Lasry-Lions regularization of $B^*_c(x)$ defined in \eqref{L-L regularity}. Then $B_{\lambda}$ has the same critical points as $B^*_c(x)$ by (P4) in Theorem \ref{same_critical}. If $x$ is a critical point of the barrier function $B^*_c(x)$, then $x$ produces  homoclinic orbits with respect to Aubry set $\tilde{\mathscr{A}}_c$ under any of the conditions of Theorem \ref{criterion_I}.

So, our first aim in this section is to look for critical points of the barrier function $B^*_c(x)$ outside the Aubry set, which is the set of the global minimizers of $B^*_c(x)$. For this purpose, we will use topological tools to obtain lower bounds for the  number of critical points of $B^*_c(x)$ outside the projected Aubry set.

Let $M$ be a closed  smooth $n$-dimensional manifold of class $C^1$, and let $\Phi^t$ be a $C^1$ flow on $M$. $\Phi^t$ is called a {\em gradient-like} flow if there exists a function $G:M\to\R$ such that,  for any $x\in M$, either $G(\Phi^t(x))<G(\Phi^s(x))$ for all $0\leqslant t<s$ or $\Phi^t(x)=x$ for all $t\geqslant 0$. Such a function $G$ is called a {\em Lyapunov function}. A point $x\in M$ is said to be a {\em rest point} of $\Phi^t$ if the orbit through $x$ is constant and we shall denote by $\text{Rest}(\Phi^t)$ the set of all rest points of the flow.

The following definition of relative Lusternik-Schnirelmann category is due to \cite{Cornea_book}. Let $X$ be a topological space and $A\subset X$. The {\em relative Lusternik-Schnirelmann category} of the pair $(X,A)$, denoted by $\text{Cat}(X,A)$, is the least integer $n\geqslant 1$ such that there exist open sets $U_0,U_1,\ldots,U_n$ in $X$, with $A\subset U_0$ and $X\subset\cup_i U_i$, such that, for all $i\geqslant1$, the set $U_i$ are contractible in $X$ and, for $i=0$, there exists a homotopy of pairs\footnote{Recall that for any pair of topological spaces $X$ and $Y$, and $A\subset X$, $B\subset Y$, $(X,A)$ and $(Y,B)$ are called {\em pairs of spaces}. A {\em map of pairs} $f:(X,A)\to(Y,B)$ is just a map $f:X\to Y$ such that $f(A)\subset B$. Two maps of pairs $f,g:(X,A)\to(Y,B)$ are homotopic if there is a homotopy $F$ with the additional restriction that $F(A\times[0,1])\subset B$.} $H:(U_0\times[0,1], A\times[0,1])\to (X,A)$ with $H_0$ the inclusion $U_0\hookrightarrow X$ and $H_1(U_0)\subset A$. It is clear that
$$
\text{Cat}(X,\varnothing)=\text{Cat}(X)
$$
where $\text{Cat}(X)$ denotes the classical Lusternik-Schnirelmann category of $X$. 

It is well known that the main interest of the Lusternik-Schnirelmann category comes from the fact that, for any smooth compact manifold $M$,  $\text{Cat}(M)$ gives a lower bound for the number of critical points of any smooth function $f$ on $M$.

Analogously, let $N\subset M$ be a compact topological submanifold of dimension $n$ of $M$ such that $N$ has a smooth interior and $\partial N=A\cup_{\partial A}B$ with $A$ and $B$ smooth $(n-1)$-dimensional submanifolds of $M$ such that $A\cap B=\partial A=\partial B$. Let $\Phi^t$ be a gradient-like flow on $M$ and let $W$ be the corresponding vector field. Assume that $W$ points out of $N$ on $A$ and inside $N$ on $B$. Denoting by $\text{Rest}_N(\Phi^t)=\text{Rest}(\Phi^t)\cap N$, we have the following.

\begin{Pro}[\cite{Cornea_book}]\label{Pro_LS_2}
Suppose $N$ and $\Phi^t$ are as above, then
$$
\text{Cat}(N,A)\leqslant\text{Rest}_N(\Phi^t).
$$
\end{Pro}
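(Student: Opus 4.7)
The plan is to construct an open cover $\{U_0,U_1,\dots,U_k\}$ of $N$, where $k=|\text{Rest}_N(\Phi^t)|$, in which $A\subset U_0$ admits a homotopy of pairs $(U_0\times[0,1],A\times[0,1])\to(N,A)$ pushing $U_0$ into $A$, while each $U_i$ for $i\geqslant 1$ is open and contractible in $N$. By the definition of relative Lusternik--Schnirelmann category recalled above, this will force $\text{Cat}(N,A)\leqslant k$, which is the desired inequality.

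First I would use the gradient-like structure to describe the fate of forward orbits inside $N$. Enumerate $\text{Rest}_N(\Phi^t)=\{p_1,\dots,p_k\}$. Given $x\in N$, compactness of $N$ together with the strict monotonicity of the Lyapunov function $G$ along non-rest orbits forces the $\omega$-limit set of $x$ (as long as the orbit stays in $N$) to be contained in $\text{Rest}_N(\Phi^t)$; since $G$ is constant on this $\omega$-limit, the orbit must converge to exactly one $p_i$. The only alternative is that the forward orbit exits $N$ in finite time, and the boundary hypothesis on $W$ ($W$ points out of $N$ on $A$ and into $N$ on $B$) forces this exit to occur across $A$. This dichotomy is the dynamical backbone of the argument.

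Second I would build the $U_i$ for $i\geqslant 1$ by taking a small disjoint family of Euclidean ball neighborhoods $V_i\subset\mathrm{int}(N)$ of the rest points and defining
\begin{equation*}
U_i=\{x\in N:\exists\,\tau\geqslant 0\text{ with }\Phi^s(x)\in N\text{ for }0\leqslant s\leqslant\tau\text{ and }\Phi^\tau(x)\in V_i\}.
\end{equation*}
Continuity of the flow makes each $U_i$ open, and a contraction of $U_i$ in $N$ is obtained by first flowing $x$ into $V_i$ along $\Phi^t$ (using a continuous choice of entry time via a smooth cutoff), and then collapsing the ball $V_i$ linearly to $p_i$. For $U_0$ I would take a collar neighborhood of $A$ in $N$ together with the open ``escape set'' of points whose forward orbit exits $N$, and use a rescaled version of $\Phi^t$ as the pair-homotopy that sends $U_0$ to $A$ while keeping $A$ inside $A$. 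The coverage property $N=U_0\cup\bigcup_{i=1}^k U_i$ then follows from the dichotomy in the previous paragraph.

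The main obstacle is the technical regularity of the constructions near the separatrices. The naive exit-time function on the escape set is only lower semicontinuous, blowing up as $x$ approaches the basin boundary of some $p_i$, so the homotopy into $A$ is not automatic; the standard remedy is to replace $\Phi^t$ by a time-reparametrized flow (via a smooth function of $G$) with uniformly bounded exit time on $U_0$, or to shrink $U_0$ to exclude a neighborhood of each basin boundary and absorb the discarded set into the nearby $U_i$. Similarly, one must choose the balls $V_i$ small enough that the contraction of $U_i$ genuinely stays in $N$ rather than just in $M$, which requires a careful use of the inward-pointing condition on $B$ to keep the homotopy away from $A$.
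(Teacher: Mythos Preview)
The paper does not supply its own proof of this proposition: it is quoted from \cite{Cornea_book} as an external tool, with no argument given. So there is nothing to compare against here.

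Your outline is the standard one (and essentially what is done in that reference): use the gradient-like dynamics to split $N$ into an escape set that the flow pushes into $A$ and finitely many basins that the flow carries into small contractible neighborhoods of the rest points, then read off the categorical cover. The technical obstacles you flag---discontinuity of the raw exit time near basin boundaries, and keeping the contractions inside $N$ rather than merely in $M$---are exactly the points that require care. One small addition worth stating explicitly at the outset: if $\text{Rest}_N(\Phi^t)$ is infinite the inequality is vacuous, so you may assume the rest set is finite and hence the $p_i$ are isolated; you use this implicitly when concluding that each bounded forward orbit converges to a single $p_i$.
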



\begin{Rem}
It is worth noting that the setting we use here is similar to the index pair $(N,A)$ in the theory of Conley index, where $N$ is an {\em isolating neighborhood} and $A$ is the {\em exit set} for $N$. The only difference is that we use the complement $M\setminus N$  instead of $N$ for our purpose and in this case, if $\Phi^t$ is a gradient flow, then the exit set $A$ is empty.
\end{Rem}

Now we apply Proposition \ref{Pro_LS_2} to our case under condition \eqref{generic_condition} which, as we recalled above, holds true for a generic family of Tonelly Hamiltonians.

\begin{The}\label{LS_category}
Let $L$ be a Tonelli Lagrangian on $\T^n$ and, for any $c\in\R^n$, assume condition \eqref{generic_condition} so that the  barrier function takes the form $B^*_c(x)=u^-_c(x)-u^+_c(x)$. Then there exist at least $\text{Cat}(\T^n\setminus U)$ critical points of $B^*_c$ outside $\mathscr{A}_c$, where $U\supset\mathscr{A}_c$ is any sufficiently small open neighborhood of $\mathscr{A}_c$.
\end{The}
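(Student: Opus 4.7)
The plan is to reduce the counting of critical points of $B^*_c$ to a classical Lusternik--Schnirelmann argument for a $C^{1,1}$ function, namely the Lasry--Lions regularization of $B^*_c$, and then to apply Proposition \ref{Pro_LS_2} to the gradient-ascent flow on the complement of a small sublevel set.

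First I would invoke Proposition \ref{same_critical} to replace $B^*_c$ by its Lasry--Lions regularization $B_\lambda$: for small $\lambda>0$, $B_\lambda$ is a $\Z^n$-periodic function of class $C^{1,1}(\R^n)$, and by property (P4) it has exactly the same critical points and critical values as $B^*_c$. Under \eqref{generic_condition} one has $B^*_c\geq 0$ with $\{B^*_c=0\}=\mathscr{A}_c$, so the analogous identities $B_\lambda\geq 0$ and $\{B_\lambda=0\}=\mathscr{A}_c$ carry over. Given the neighborhood $U\supset\mathscr{A}_c$, I would then pick $\varepsilon>0$ small enough that $V:=\{B_\lambda<\varepsilon\}\subset U$ (possible since the sublevel sets of $B_\lambda$ shrink to $\mathscr{A}_c\subset U$ by continuity and compactness) and such that $\varepsilon$ is a regular value of $B_\lambda$ (possible by Morse--Sard applied to the $C^1$ real-valued function $B_\lambda$, whose critical values form a set of Lebesgue measure zero). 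Then $N:=\T^n\setminus V$ is a compact $C^1$ submanifold of $\T^n$ with boundary $\partial V=\{B_\lambda=\varepsilon\}$, and $\mathscr{A}_c\subset V\subset U$.

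The next step is to apply Proposition \ref{Pro_LS_2} to the gradient-\emph{ascent} flow $\Phi^t$ of $B_\lambda$, generated by the Lipschitz vector field $W:=\nabla B_\lambda$. This is gradient-like in the paper's sense with Lyapunov function $G=B_\lambda$, strictly increasing along non-constant trajectories. On $\partial V$ the field $W$ is non-zero (because $\varepsilon$ is regular) and points toward $\{B_\lambda>\varepsilon\}$, i.e., \emph{into} $N$. Hence in the notation of Proposition \ref{Pro_LS_2} this is the case $A=\varnothing$, $B=\partial V$, and the proposition yields
\[
\text{Cat}(\T^n\setminus V)\;=\;\text{Cat}(N,\varnothing)\;\leq\;\#\,\text{Rest}_N(\Phi^t),
\]
whose right-hand side equals the number of critical points of $B_\lambda$ in $N$ and therefore, by Proposition \ref{same_critical}, the number of critical points of $B^*_c$ in $N\subset\T^n\setminus\mathscr{A}_c$.

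The step I expect to be the main obstacle is the identification of $\text{Cat}(\T^n\setminus V)$ with the target quantity $\text{Cat}(\T^n\setminus U)$, since LS category is not monotone under inclusion of general subsets. Reading the theorem's qualifier \emph{sufficiently small} as allowing $U$ to be taken as the regular sublevel set $V$ itself already gives the stated bound. For a general $U$, one would additionally select a regular value $\varepsilon'>\varepsilon$ with no critical values of $B_\lambda$ in $[\varepsilon,\varepsilon']$ and $\overline{U}\subset\{B_\lambda<\varepsilon'\}$, and then use the gradient flow of $B_\lambda$ on the critical-point-free collar $\{\varepsilon\leq B_\lambda\leq\varepsilon'\}$ to build a deformation retract of $\T^n\setminus V$ onto $\T^n\setminus U$, thereby equating their LS categories.
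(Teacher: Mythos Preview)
Your approach is essentially the paper's: regularize $B^*_c$ via Lasry--Lions to a $C^{1,1}$ function $B_\lambda$ with the same critical points (Proposition~\ref{same_critical}), take $N$ to be the complement of a small regular sublevel set of $B_\lambda$, and apply Proposition~\ref{Pro_LS_2} to the gradient flow with $A=\varnothing$. The paper likewise interprets ``sufficiently small $U$'' as a sublevel set $U_a=\{B_\lambda<a\}$, so your reading is the intended one, and your extra paragraph on comparing $\text{Cat}(\T^n\setminus V)$ with $\text{Cat}(\T^n\setminus U)$ for a general $U$ is more than the paper actually provides.

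There is, however, one technical slip. You justify the existence of a regular value $\varepsilon$ near $0$ by invoking Morse--Sard for the $C^1$ function $B_\lambda$. The classical Morse--Sard theorem for maps $\R^n\to\R$ requires $C^n$ smoothness; $C^{1,1}$ suffices only when $n\leqslant 2$, and there are well-known $C^{n-1}$ counterexamples in higher dimension whose critical values fill an interval. Since the theorem is stated for general $\T^n$, this step does not go through as written. The paper avoids the issue entirely by a dichotomy: either there are critical points of $B_\lambda$ outside $\mathscr{A}_c$ in every neighborhood of $\mathscr{A}_c$---in which case there are infinitely many and the conclusion is immediate---or some neighborhood of $\mathscr{A}_c$ contains no critical point other than $\mathscr{A}_c$ itself, and then, since $\{B_\lambda<a\}$ shrinks to $\mathscr{A}_c$, every $a\in(0,a_0]$ is automatically a regular value for some $a_0>0$. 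Replacing your Morse--Sard appeal with this argument makes the proof complete.
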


\begin{proof}
For any $\lambda>0$ small enough, $B_{\lambda}$ is of class $C^{1,1}$ and has the same critical points as $B^*_c$ by (P4) of Proposition \ref{same_critical}. Thus, it is enough to estimate the number of critical points of $B_{\lambda}$ outside $\mathscr{A}_c$ .

In order to give a lower bound for the number of critical points of $B_{\lambda}$  outside $\mathscr{A}_c$,  let us suppose  there exists an open set $U\supset\mathscr{A}_c$, such that $\mathscr{A}_c$ is the unique critical set (minimizers of $B_{\lambda}$) of $B_{\lambda}$ in $U$. This assumption can be made without loss of generality for, otherwise, $B_{\lambda}$ would have infinitely many critical points outside $\mathscr{A}_c$ and the conclusion would hold a fortiori. More precisely, we can assume that there exists $a_0>0$ such that any $a\in (0,a_0]$ is a regular value of $B_{\lambda}$, and take $U=U_a=\{y:B_{\lambda}(y)<a\}$ for some fixed $a\in (0,a_0]$. Observe that $U$ is an isolated invariant set of the gradient flow $\Phi^t_{\lambda}$ ($\lambda>0$ small enough) generated by the potential function $B_{\lambda}$, that is, 
$\Phi_{\lambda}^t(x)=x_{\lambda}(t)$, $t\in\R$, where
$$
\dot{x}_{\lambda}(t)=DB_{\lambda}(x_{\lambda}(t)).
$$
We can 
now apply Proposition \ref{Pro_LS_2} taking $N=\T^n\setminus U$ and $A=\varnothing$ because $\Phi_{\lambda}^t$ is a gradient flow and $B_{\lambda}$ is the required Lyapunov function. It follows that
$$
\text{Rest}_{\T^n\setminus U}(\Phi^t_{\lambda})\geqslant\text{Cat}(\T^n\setminus U,\varnothing)=\text{Cat}(\T^n\setminus U).
$$
Then $u_{\lambda}$ has at least $\text{Cat}(\T^n\setminus U)$ critical points outside $\mathscr{A}_c$.
\end{proof}

\subsection{Homoclinic orbits outside the Aubry set}
In this paper, a homoclinic orbit $(\gamma,\dot{\gamma}):(-\infty,+\infty)\to T\T^n$ (with respect to the Aubry set $\tilde{\mathscr{A}}_c$) is said to be {\em minimal} if there exists $t_0\in\R$ such that $\gamma$ is both backward calibrated  on $(-\infty,t_0]$ and  forward  calibrated  on $[t_0,+\infty)$.

It is clear that, when $\gamma:(-\infty,+\infty)\to\T^n$ produces a minimal homoclinic orbit with respect to Aubry set, there exists $t_0\in\R$ such that $x_0=\gamma(t_0)$ is a critical point of the barrier function $B^*_c$. On the other hand, for any critical point of $B^*_c$ outside $\mathscr{A}_c$, we cannot conclude whether it determines an expected minimal homoclinic orbit until verifying any conditions in Theorem \ref{criterion_I}.

Unfortunately, checking the validity of the conditions of Theorem \ref{criterion_I} may be difficult in arbitrary dimension $n\geqslant 2$ without any any assumption on $\mathscr{A}_c$. On the other hand, conditions c) and d) seem easier to handle in  dimension two because, recalling Remark \ref{re:MP},  it suffices to find critical points of the approximating barrier function $B_{\lambda}$ outside $\mathscr{A}_c$ which are not  local maximum points.

For our purposes, we need the following result by Hofer. Let $u\in C^1(\R^n)$ and let $x\in\R^n$ be a critical point of $u$. $x$ is called a critical point of {\em mountain-pass type} if, for any open neighbourhood $U$ of $x$, $u^{-1}((-\infty,u(x)))\cap U$ is nonempty and  not pathwise connected.

\begin{Pro}\cite{Hofer1985}\label{Hofer_result}
Let $u\in C^1(\R^n)$ and assume that $x_0, x_1\in\R^n$ are distinct points. Define
\begin{equation}\label{eq:b_def}
b=\inf_{\gamma\in\Gamma}\sup_{t\in[0,1]}u(\gamma(t)),
\end{equation}
where $\Gamma$ is the set of all continuous paths $\gamma:[0,1]\to\R^n$ with $\gamma(0)=x_0$ and $\gamma(1)=x_1$. If 
\begin{equation}\label{eq:b}
b>\max\{u(x_0), u(x_1)\}, 
\end{equation}
then there exists at least one critical point, with critical value $b$, which is either a local minimum point or a point of mountain-pass type. 
\end{Pro}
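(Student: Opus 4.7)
My plan is a contradiction argument that combines a deformation lemma with a local rerouting of paths through the sublevel set $\{u<b\}$. Suppose the conclusion fails and set $K_b:=\{x\in\R^n : Du(x)=0,\ u(x)=b\}$. By assumption each $x^*\in K_b$ is neither a local minimum nor of mountain-pass type: the former supplies points arbitrarily close to $x^*$ with $u<b$, and the latter supplies an open neighborhood $U_{x^*}$ on which $\{u<b\}\cap U_{x^*}$ is nonempty and path-connected. The goal is to convert an almost-minimizing path $\gamma\in\Gamma$ with $\sup u\circ\gamma$ just above $b$ into a path with supremum strictly below $b$, contradicting \eqref{eq:b_def}.

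I would first show $K_b$ is nonempty. Using \eqref{eq:b}, minimizing paths $\gamma_n\in\Gamma$ with $\sup_t u(\gamma_n(t))\to b$ exist; applying Ekeland's variational principle to the min-max functional $\gamma\mapsto\sup_t u(\gamma(t))$, together with the negative pseudo-gradient flow of $u$, produces a sequence $(y_n)\subset\R^n$ satisfying $u(y_n)\to b$ and $|Du(y_n)|\to 0$. Localizing to a large ball containing $x_0$ and $x_1$, or invoking the $\Z^n$-periodicity available in this paper's application, yields boundedness of $(y_n)$, so a subsequence converges to some $x^*\in K_b$.

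For the main step, cover $K_b$ by the neighborhoods $U_{x^*}$ and extract a finite subcover $\{U_{x^*_i}\}$; choose a smaller open neighborhood $V$ of $K_b$ with $\overline{V}\subset\bigcup_i U_{x^*_i}$. A standard deformation lemma yields $\varepsilon\in(0,b-\max\{u(x_0),u(x_1)\})$ and a continuous map $\eta:\R^n\to\R^n$ that sends $\{u\leqslant b+\varepsilon\}\setminus V$ into $\{u\leqslant b-\varepsilon\}$ while fixing $\{u\leqslant b-\varepsilon\}$. For $\gamma_n$ with $\sup u\circ\gamma_n<b+\varepsilon$, the image $\eta\circ\gamma_n$ lies in $\{u<b\}$ except on the subintervals $[s_j,t_j]$ where $\gamma_n$ traversed $V$; on each such excursion, the path-connectedness of $\{u<b\}\cap U_{x^*_i}$ permits one to replace the segment by a continuous arc in $\{u<b\}\cap U_{x^*_i}$ joining $\eta(\gamma_n(s_j))$ and $\eta(\gamma_n(t_j))$. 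Splicing yields a path in $\Gamma$ with supremum strictly below $b$, the desired contradiction.

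The main obstacle is arranging the nested neighborhoods and the splicing so that the endpoints $\eta(\gamma_n(s_j))$ and $\eta(\gamma_n(t_j))$ of each excursion actually lie in the path-connected set $\{u<b\}\cap U_{x^*_i}$, and so that the rerouted arcs can be concatenated continuously with the rest of $\eta\circ\gamma_n$. I would handle this by thickening the neighborhoods of $K_b$ in two stages and choosing $\varepsilon$ so small that the deformation $\eta$ pushes crossings of $\partial V$ into the interior of $\{u<b\}\cap U_{x^*_i}$, then approximating $\gamma_n$ by a path meeting $\partial V$ transversally in finitely many points so that the entry/exit structure is well defined.
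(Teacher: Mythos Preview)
The paper does not prove this proposition: it is quoted from \cite{Hofer1985} and is immediately followed by a remark, with no proof given. So there is nothing to compare your argument against here; the authors simply invoke Hofer's result as a black box.

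Your sketch is in the spirit of Hofer's original argument (contradiction via a deformation lemma combined with local path-connectedness of the sublevel set near each critical point at level $b$). One genuine gap in your write-up, relative to the proposition \emph{as stated}, is the compactness step: you appeal to ``localizing to a large ball'' or to $\Z^n$-periodicity to get boundedness of the Palais--Smale sequence $(y_n)$, but neither is part of the hypotheses of the proposition, which only assumes $u\in C^1(\R^n)$. Hofer's theorem is stated under a Palais--Smale type condition, and without it the conclusion can fail; the paper is tacitly relying on the fact that in its application $B_\lambda$ is $\Z^n$-periodic, so PS is automatic. If you want a self-contained proof you should either add PS as a hypothesis or restrict to the periodic setting from the outset. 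The splicing/rerouting step you flag as delicate is indeed the heart of the matter, and your two-stage neighborhood scheme is the standard way to handle it.
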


\begin{Rem}\label{re:b}
Note that, in the above lemma, \eqref{eq:b} is satisfied if $x_0,x_1\in\R^n$ are distinct isolated local minimum points of $u$. Indeed, taking  closed disjoint balls $B_0$ and $B_1$ centered at $x_0$ and $x_1$, respectively, let
$$
b_0=\min_{x\in\partial B_0}u(x)>u(x_0),\qquad b_1=\min_{x\in\partial B_1}u(x)>u(x_1).
$$
Then, by \eqref{eq:b_def}, for any $\gamma\in\Gamma$, 
$$
\max_{t\in[0,1]}u(\gamma(t))\geqslant\max\{b_0,b_1\}.
$$
It follows that $b\geqslant\max\{b_0,b_1\}>\max\{u(x_0), u(x_1)\}$.
\end{Rem}

\begin{The}\label{main_thm}
Let $L$ be a Tonelli Lagrangian on $\T^2$ and, for any $c\in\R^n$, assume condition \eqref{generic_condition} so that $B^*_c(x)=u^-_c(x)-u^+_c(x)$. 

If there exists an open neighborhood $U\subset\T^2$ of $\mathscr{A}_c$ such that $\T^2\setminus U$ is non-contractible, then there exists a minimal homoclinic orbit with respect to the Aubry set $\tilde{\mathscr{A}}_c$ outside $\tilde{\mathscr{A}}_c$.
More precisely, there exists a $C^2$ curve $\gamma:(-\infty,\infty)\to\T^2$ which is an extremal of the associated Euler-Lagrange equation, such that the $\alpha$-limit and $\omega$-limit sets of $(\gamma,\dot{\gamma})$ belong to $\tilde{\mathscr{A}}_c$. Moreover, $\gamma$ is a backward calibrated curve on $(-\infty,0]$ and a forward calibrated curve on $[0.\infty)$.
\end{The}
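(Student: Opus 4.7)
The plan is to combine three tools developed earlier in the paper: the Lasry-Lions regularization (Proposition \ref{same_critical}), Hofer's mountain-pass theorem (Proposition \ref{Hofer_result}), and Theorem \ref{criterion_I}(c) together with Proposition \ref{homoclinic_orbit}. The non-contractibility of $\T^2\setminus U$ will be used to set up a mountain-pass configuration in the universal cover $\R^2$ with a \emph{strictly positive} pass value; this positivity is what guarantees that the critical point produced by Hofer's theorem lies outside the Aubry set.

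First I would pass to the Lasry-Lions regularization $B_\lambda\in C^{1,1}(\T^2)$ for $0<\lambda\ll 1$; by Proposition \ref{same_critical}, $B_\lambda$ and $B^*_c$ share the same critical points, critical values, and local maxima. Lifting to the universal cover gives a $\Z^2$-periodic function $\tilde B_\lambda:\R^2\to\R$ vanishing precisely on the lifted Aubry set $\tilde{\mathscr A}_c$. After shrinking $U$ to a smaller open neighborhood of $\mathscr A_c$ if necessary, I would extract from the non-contractibility hypothesis a simple closed curve $\ell\subset\T^2\setminus U$ representing some nonzero class $v\in\pi_1(\T^2)=\Z^2$. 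Concatenating the $\Z$-translates of a single lift of $\ell$ produces a properly embedded $v$-periodic curve $\Sigma\subset\R^2\setminus\tilde U$; since $\Sigma$ lies in the plane and is $v$-periodic, it separates $\R^2$ into two unbounded components. By compactness and $\Z^2$-periodicity there is a constant $\delta>0$ with $\tilde B^*_c\geqslant\delta$ on $\Sigma$.

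I would then fix $\bar x\in\mathscr A_c$ and choose two of its lifts $x_0$ and $x_1=x_0+w$ lying in distinct components of $\R^2\setminus\Sigma$. Every continuous path from $x_0$ to $x_1$ crosses $\Sigma$, so
\begin{equation*}
b:=\inf_{\eta}\sup_{t\in[0,1]}\tilde B_\lambda(\eta(t))\geqslant\delta>0=\max\{\tilde B_\lambda(x_0),\tilde B_\lambda(x_1)\},
\end{equation*}
where the infimum runs over continuous paths $\eta:[0,1]\to\R^2$ joining $x_0$ and $x_1$. Proposition \ref{Hofer_result} then produces a critical point $x^*\in\R^2$ of $\tilde B_\lambda$ with $\tilde B_\lambda(x^*)=b$, which is either a local minimum or of mountain-pass type; in either case, $x^*$ is not a local maximum. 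By (P4)--(P5) of Proposition \ref{same_critical}, $x^*$ is a (generalized) critical point of $\tilde B^*_c$ with $\tilde B^*_c(x^*)=b>0$ that is not a local maximum of $\tilde B^*_c$; in particular, $x^*\notin\tilde{\mathscr A}_c$.

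Finally, Remark \ref{re:MP} shows that condition (c) of Theorem \ref{criterion_I} holds at $x^*$, whence $0\in D^*v^-(x^*)-D^*v^+(x^*)$, and Proposition \ref{homoclinic_orbit} furnishes the required $C^2$ extremal $\gamma$ together with its calibration properties and its $\alpha,\omega$-limit behavior inside $\tilde{\mathscr A}_c$. The principal obstacle in this outline is the topological step: justifying that the bare non-contractibility hypothesis can be converted, possibly after shrinking $U$, into a non-contractible simple loop $\ell\subset\T^2\setminus U$ whose $\Z$-periodic lift $\Sigma$ separates $\R^2$ with a uniform positive lower bound on $\tilde B^*_c$ along it. This step relies essentially on working in dimension two, since only in the plane does a properly embedded $v$-periodic curve automatically separate the universal cover; this is what ties the proof to $\T^2$.
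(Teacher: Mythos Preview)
Your route is genuinely different from the paper's. Rather than lifting to $\R^2$ and manufacturing a mountain-pass configuration between two lifts of an Aubry point, the paper first invokes Theorem~\ref{LS_category}: since $\T^2\setminus U$ is non-contractible, $\text{Cat}(\T^2\setminus U)\geqslant 2$, so $B_\lambda$ has at least two critical points in $\T^2\setminus U$. It then argues by contradiction, assuming every critical point of $B_\lambda$ outside $U$ is an isolated local maximum, and applies Hofer's result to $-B_\lambda$ between two such maxima (via Remark~\ref{re:b}); this produces either a mountain-pass point for $-B_\lambda$ (hence not a local maximum of $B_\lambda$, so condition~(c) of Theorem~\ref{criterion_I} applies) or a further isolated local maximum, and iteration eventually yields an accumulation point, contradicting isolation. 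Your single-shot argument is cleaner when it works, since it avoids this iteration and lands directly on a critical point that is already not a local maximum.

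However, the obstacle you flag is a real gap, not a technicality. Non-contractibility of $\T^2\setminus U$ does \emph{not} force $\T^2\setminus U$ (or even $\T^2\setminus\mathscr{A}_c$) to carry a loop that is nontrivial in $\pi_1(\T^2)$. For instance, if $\mathscr{A}_c$ happened to contain the figure-eight $(S^1\times\{0\})\cup(\{0\}\times S^1)$, then $\T^2\setminus\mathscr{A}_c$ would sit inside an open $2$-disk and every loop avoiding $\mathscr{A}_c$ would be null-homotopic in $\T^2$; yet one can choose $U\supset\mathscr{A}_c$ so that $\T^2\setminus U$ is a small annulus inside that disk, hence non-contractible, and the theorem's hypothesis is met. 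Shrinking $U$ does not help, since $\T^2\setminus U'$ remains trapped in the same disk for every $U'\supset\mathscr{A}_c$. In such a configuration your separating curve $\Sigma$ simply does not exist, and the mountain-pass value $b$ cannot be bounded away from zero by your mechanism. The paper sidesteps this entirely by using the Lusternik--Schnirelmann category, which detects non-contractibility directly without requiring a loop that is essential in the ambient torus. Your argument would be valid under the stronger hypothesis that the inclusion $\T^2\setminus U\hookrightarrow\T^2$ is nonzero on $\pi_1$, but that is not what Theorem~\ref{main_thm} assumes.
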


\begin{proof}
Let $U$ be any neighborhood of $\mathscr{A}_c$. By Theorem \ref{LS_category}, there exist at least $\text{Cat}(\T^2\setminus U)$ critical points of $B^*_c$ in $\T^2\setminus U$, and we have $\text{Cat}(\T^2\setminus U)\geqslant 2$ since $\T^2\setminus U$ is not contractible. Thus, there exist at least two distinct critical points of $B_{\lambda}$ outside $U$, where $B_{\lambda}$ is the Lasry-Lions regularization of  $B^*_c$.

By (P5) in Proposition \ref{same_critical} we have  that $x$ is a local maximum point of $B^*_c$ if and only if it is also a local maximum point of $B_{\lambda}$ with $0<\lambda\leqslant\lambda_1\leqslant\lambda_0$. Now,  suppose  all the critical points of $B_{\lambda}$ in $\T^2\setminus U$ are isolated local maximum points. Otherwise, there would exist a critical point of $B_{\lambda}$ ($B^*_c$) which satisfies condition (c) (see Remark \ref{re:MP}) or (d) of Theorem \ref{criterion_I} yielding the existence of the expected homoclinic orbit. 
Since there exist at least two isolated local maximum points of $B_{\lambda}$, say $x_0$ and $x_1$, then by defining
$$
b_{\lambda}=\sup_{\gamma\in\Gamma}\inf_{t\in[0,1]}B_{\lambda}(\gamma(t))
$$
as in Proposition \ref{Hofer_result} (here we use $-u$ instead of $u$), together with Remark~\ref{re:b}, we have a third critical point $x_2$ with the critical value $b_{\lambda}$ which is local maximum or of mountain-pass type in the sense of Hofer. In the latter case, we have the expected conclusion by condition (c) and Remark \ref{re:MP}. In the former case, we have a third isolated local maximum point of $B_\lambda$, say $x_2$.
Inductively, we can construct a sequence of isolated local maximum point of $B_\lambda$ (thus, of $B^*_c$),  a subsequence of which should converge to a cluster point $\bar{x}$. This contradicts the assumption that all the critical points of $B_{\lambda}$ in $\T^2\setminus U$ are isolated local maximum points and completes the proof.
\end{proof}

Finally, we would like to point out that not only does our method apply  to construct homoclinic orbits with respect to Aubry sets but could be used to connect orbits between different Aubry classes under condition \eqref{generic_condition_2}, which ensures there exists finitely many Aubry classes in $\mathscr{A}_c$ and holds true, once again, for a generic family of Tonelly Hamiltonians. Suppose that, for a given $c\in\R^n$, there exist distinct Aubry classes $A_1$ and $A_2$ in $\mathscr{A}_c$ and define the barrier function
$$
B_{1,2}(x)=u^-_1(x)-u^+_2(x),\quad x\in\T^n,
$$
where $u^-_1$ (resp. $u^+_2$) is an elementary backward (resp. forward) viscosity solution associated with class $A_1$ (resp. $A_2$).
\begin{The}\label{connecting_Aubry_class}
Let $L$ be a Tonelli Lagrangian on $\T^2$ and assume condition \eqref{generic_condition_2}. Let $c\in\R^n$ and suppose there exists an open neighborhood $U\subset\T^2$ of $\mathscr{A}_c$ such that $\T^2\setminus U$ is non-contractible. Then there must exist a connecting orbit between any pair of distinct Aubry classes such that each orbit passes through a critical point of the associated barrier functions $B_{1,2}$ in $\T^2\setminus U$. Moreover, such a critical point is of mountain-pass type or a nonisolated local maximum point.
\end{The}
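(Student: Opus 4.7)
The plan is to adapt the proof of Theorem~\ref{main_thm} to the multi-class barrier $B_{1,2}=u^-_1-u^+_2$. This function is locally semiconcave with linear modulus because $u^-_1$ is locally semiconcave and $u^+_2$ is locally semiconvex, and the fundamental property $D^*u^{\pm}_j\subset\{p:H(x,p)=\alpha(c)\}$ of elementary weak KAM solutions holds for each $u^\pm_j$ individually. Consequently, Proposition~\ref{same_critical}, Theorem~\ref{criterion_I}, and the argument of Proposition~\ref{homoclinic_orbit} all transfer to $B_{1,2}$ after relabeling $u^\pm_c\rightsquigarrow u^\mp_j$. In particular, once one has a point $x\in\T^2\setminus U$ with $0\in D^*u^-_1(x)-D^*u^+_2(x)$, Proposition~\ref{reachable_grad_and_backward} supplies a $C^2$ extremal curve through $x$ which is backward-calibrated for $u^-_1$ and forward-calibrated for $u^+_2$, with $\alpha$-limit in $\tilde A_1$ and $\omega$-limit in $\tilde A_2$; this is the connecting orbit sought.

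To locate such an $x$, I would first pass to the Lasry--Lions regularization $B^\lambda_{1,2}\in C^{1,1}$ via Proposition~\ref{same_critical}, which shares critical points, critical values, and local maxima with $B_{1,2}$. Let $V=\{B^\lambda_{1,2}<m+a\}$ be a sublevel set associated with a small regular value $a$ above $m=\min B^\lambda_{1,2}$. Assuming $V\subset U$ (see the obstacle below), the gradient flow of $B^\lambda_{1,2}$ on $N=\T^2\setminus V$ has empty exit set, so Proposition~\ref{Pro_LS_2} combined with the retraction identification $\mathrm{Cat}(N)\geqslant\mathrm{Cat}(\T^2\setminus U)\geqslant 2$ gives at least two critical points of $B^\lambda_{1,2}$ in $\T^2\setminus U$. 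The mountain-pass dichotomy of the proof of Theorem~\ref{main_thm} then applies essentially verbatim: either some critical point is nonisolated, triggering Theorem~\ref{criterion_I}(d); or fails to be a local maximum, triggering Theorem~\ref{criterion_I}(c) via Remark~\ref{re:MP}; or all are isolated local maxima, in which case Proposition~\ref{Hofer_result} applied to $-B^\lambda_{1,2}$ with the strict min-max inequality supplied by Remark~\ref{re:b} produces a further critical point of mountain-pass type, and the cluster-point argument from Theorem~\ref{main_thm}'s finale rules out an infinite sequence of isolated local maxima on the compact $\T^2$.

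The one genuine complication, absent in the single-class proof, is that the minimum set $M_{1,2}$ of $B_{1,2}$ need not be contained in $\mathscr{A}_c$: if a minimizing heteroclinic from $A_1$ to $A_2$ exists, its projection lies in $M_{1,2}\setminus\mathscr{A}_c$, and generically this projection need not sit inside the prescribed $U$. This will be the main obstacle, but I would dispatch it with a dichotomy. If $M_{1,2}\setminus U\neq\varnothing$, then any $x_0\in M_{1,2}\setminus U$ is a critical point of $B_{1,2}$ in $\T^2\setminus U$; since the projection of a minimizing heteroclinic is a $C^1$ curve consisting entirely of minimum points, $x_0$ is a nonisolated critical point and Theorem~\ref{criterion_I}(d) delivers the limiting-differential coincidence (alternatively, the superlevel set $\Lambda^+_{x_0}$ equals $\T^2$ and Theorem~\ref{criterion_I}(b) applies). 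In the complementary case $M_{1,2}\subset U$, after possibly shrinking $U$ by a deformation retraction onto $\mathscr{A}_c$ (which preserves the non-contractibility of $\T^2\setminus U$), the sublevel set $V$ can be arranged inside $U$ and the argument of the previous paragraph runs unmodified. Either branch produces the required critical point in $\T^2\setminus U$ and hence, via the heteroclinic version of Proposition~\ref{homoclinic_orbit}, the connecting orbit between $\tilde A_1$ and $\tilde A_2$.
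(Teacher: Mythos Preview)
Your approach is essentially the paper's: both reduce to running the machinery of Theorem~\ref{main_thm} on the multi-class barrier $B_{1,2}$. The paper's own proof is very brief --- its first paragraph argues that backward calibrated curves for the elementary solution $u^-_1=h_c(y,\cdot)$ have $\alpha$-limit contained in a single Aubry class (invoking that Aubry classes are connected and, under \eqref{generic_condition_2}, finite in number, hence coincide with the connected components of $\mathscr{A}_c$), and then declares that ``the same reasoning of the proof of Theorem~\ref{main_thm}, applied to $B_{1,2}$'' finishes. You assert the conclusion ``$\alpha$-limit in $\tilde A_1$ and $\omega$-limit in $\tilde A_2$'' without justification; this is exactly the content of the paper's first paragraph, and you should supply that argument rather than take it for granted.

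Conversely, you go beyond the paper in flagging a genuine subtlety it glosses over: the minimum set $M_{1,2}$ of $B_{1,2}$ is not $\mathscr{A}_c$ but the larger set $\{x:h_c(y_1,x)+h_c(x,y_2)=h_c(y_1,y_2)\}$, which contains projected connecting orbits and need not sit inside the prescribed $U$. Your dichotomy on $M_{1,2}\setminus U$ is a reasonable patch. Two small repairs: in branch~1 the critical point you exhibit is a global minimizer, which does not literally satisfy the ``mountain-pass type or nonisolated local maximum'' clause of the statement (though the connecting orbit certainly exists there, via condition~(b) or~(c) of Theorem~\ref{criterion_I}); and in branch~2 the phrase ``shrinking $U$ by a deformation retraction'' is backwards --- what you actually need is to take $a>0$ small enough that $V\subset U$ (immediate from compactness of $M_{1,2}\subset U$ open), and then note that any critical point of $B^\lambda_{1,2}$ lying in $U$ already lies in $M_{1,2}\subset V$, so the rest points found in $N=\T^2\setminus V$ automatically lie in $\T^2\setminus U$.
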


\begin{proof}
Since $u^-(x)=h_c(y,x)$ for some $y\in\mathscr{M}_c$, then for any $x\in\T^2$ and $p\in D^*u^-(x)$ there exists a unique $(u^-,L_c,\alpha(c))$ calibrated $C^1$ curve $\gamma:(-\infty,0]$ such that $\gamma(0)=x$ and $p=\frac{\partial L_c}{\partial v}(\gamma(0),\dot{\gamma}(0))$. Denote by $y$ the projection of an $\alpha$-limit point of  $(\gamma,\dot{\gamma})$ onto $\T^n$. Now, recall  Aubry classes are connected sets (see, e.g. \cite{Contreras-Paternain}) and each of them contains an ergodic component of $\mathscr{M}_c$. Hence, if we assume that there are only finitely many Aubry classes, the connected components of $\mathscr{A}_c$ are finite and must coincide with the Aubry classes. This implies the $\alpha$-limit set of $\gamma$  belongs to the Aubry class containing $y$.

From this point on, the proof of the existence of  connecting orbits between the Aubry classes $A_1$ and $A_2$ uses the same reasoning of the proof of Theorem \ref{main_thm}, applied to the barrier function $B_{1,2}.$
\end{proof}

\begin{Rem}
For the study of the existence of possible genuine heteroclinic orbits connecting two distinct Aubry sets $\mathscr{A}_{c_1}$ and $\mathscr{A}_{c_2}$ with $[c_1]\not=[c_2]$, we need introduce some other kind of barrier functions. Unlike the homiclinic case, we always need condition \eqref{generic_condition_2} to ensure the finiteness of the Aubry classes for $\mathscr{A}_{c_1}$ and $\mathscr{A}_{c_2}$.

Fix $c_1$ and $c_2$, suppose that there exists $i_1$ (resp. $i_2$) distinct Aubry classes $A_{1,1},\ldots,A_{1,i_1}$ in $\mathscr{A}_{c_1}$ (resp. $A_{2,1},\ldots,A_{2,i_2}$ in $\mathscr{A}_{c_2}$). Denote by $u^-_{c_1,j}$ (resp. $u^+_{c_2,k}$), $j=1,\ldots,i_1$ ($k=1,\ldots,i_2$) the elementary backward (resp. forward) viscosity solutions determined by $A_{1,j}$ (resp. $A_{2,k}$). We set
$$
v^-_j(x)=\langle c_1,x\rangle+u^-_{c_1,j}(x),\quad v^+_k(x)=\langle c_2,x\rangle+u^+_{c_2,k}(x)\quad x\in\R^n
$$
Now, define the associated barrier functions
\begin{equation}\label{B_jk}
B_{j,k}(x)=v^-_j(x)-v^+_k(x),\quad x\in\R^n,
\end{equation}
where $j=1,\ldots,n_1$ and $k=1,\ldots,n_2$. Recall that, if $x$ is a local minimum point of $B_{j,k}$, then there exists an expected connecting orbit between $A_{1,i}$ and $A_{2,j}$. It is worth noting that we  cannot ensure the existence of critical points of $B_{j,k}$, in general, because, unlike in the homoclinic case,  the barrier function $B_{j,k}$ is the sum of a $\T^n$-periodic function with a nonzero linear function. We will study this case in the future.
\end{Rem}

\begin{Rem}
Actually, the result in Theorem \ref{main_thm} holds true under certain more general assumptions. More precisely, given any conjugate pair of weak KAM solutions $(u^-,u^+)$, define
$$
B(x)=u^-(x)-u^+(x).
$$
Then, by the same argument of the proof of Theorem \ref{main_thm} on $B_c^*$, under  condition \eqref{generic_condition} one can prove  the existence of minimal homoclinic orbits outside the Aubry set. The only difference is that, under condition \eqref{generic_condition} or  \eqref{generic_condition_2}, we can even determine an Aubry class as a specific $\alpha$- or $\omega$-limit sets according to a fixed conjugate pair of elementary weak KAM solutions as explained in the proof of Theorem~\ref{connecting_Aubry_class}. 
The connecting orbits between distinct Aubry classes provided by such a theorem pass through a critical point of mountain-pass type or an nonisolated local maximum point of the barrier function $B_{1,2}$ outside the Aubry set, unlike the ones in \cite{Contreras-Paternain,Fathi98} which are constructed by the Ma\~n\'e set in finite covering spaces.
\end{Rem}

\end{document}